%


\documentclass[11pt,a4paper]{amsart}
\usepackage{amssymb,amsmath}
\usepackage[dvips]{color}
\usepackage[dvips,final]{graphics}
\pagestyle{plain}
\usepackage{epstopdf}
\usepackage{amscd}

\usepackage{pdfsync}

\theoremstyle{plain}
\newtheorem{theorem}{Theorem}[section]

\newtheorem{corollary}[theorem]{Corollary}

\newtheorem{lemma}[theorem]{Lemma}
\theoremstyle{definition}
\newtheorem{definition}[theorem]{Definition}

\theoremstyle{remark}
\newtheorem{remark}[theorem]{Remark}

\begin{document}

\bibliographystyle{plain}

\newcommand{\Rn}{\mathbb R^n}
\newcommand{\E}{\mathbb E}
\newcommand{\Rm}{\mathbb R^m}
\newcommand{\rn}[1]{{\mathbb R}^{#1}}
\newcommand{\R}{\mathbb R}
\newcommand{\C}{\mathbb C}
\newcommand{\G}{\mathbb G}
\newcommand{\M}{\mathbb M}
\newcommand{\Z}{\mathbb Z}
\newcommand{\D}[1]{\mathcal D^{#1}}
\newcommand{\Ci}[1]{\mathcal C^{#1}}
\newcommand{\Ch}[1]{\mathcal C_{\mathbb H}^{#1}}
\renewcommand{\L}[1]{\mathcal L^{#1}}
\newcommand{\BVG}{BV_{\G}(\Omega)}
\newcommand{\supp}{\mathrm{supp}\;}

\newcommand{\dom}{\mathrm{Dom}\;}
\newcommand{\Ast}{\un{\ast}}

\newcommand{\average}{{\mathchoice {\kern1ex\vcenter{\hrule height.4pt
width 6pt depth0pt} \kern-9.7pt} {\kern1ex\vcenter{\hrule height.4pt width
4.3pt depth0pt}
\kern-7pt} {} {} }}
\newcommand{\ave}{\average\int}

\newcommand{\hhd}[1]{{\mathcal H}_d^{#1}}
\newcommand{\hsd}[1]{{\mathcal S}_d^{#1}}

\newcommand{\he}[1]{{\mathbb H}^{#1}}
\newcommand{\hhe}[1]{{H\mathbb H}^{#1}}

\newcommand{\cov}[1]{{\bigwedge\nolimits^{#1}{\mfrak g}}}
\newcommand{\vet}[1]{{\bigwedge\nolimits_{#1}{\mfrak g}}}

\newcommand{\covw}[2]{{\bigwedge\nolimits^{#1,#2}{\mfrak g}}}

\newcommand{\vetfiber}[2]{{\bigwedge\nolimits_{#1,#2}{\mfrak g}}}
\newcommand{\covfiber}[2]{{\bigwedge\nolimits^{#1}_{#2}{\mfrak g}}}

\newcommand{\covwfiber}[3]{{\bigwedge\nolimits^{#1,#2}_{#3}{\mfrak g}}}

\newcommand{\covv}[2]{{\bigwedge\nolimits^{#1}{#2}}}
\newcommand{\vett}[2]{{\bigwedge\nolimits_{#1}{#2}}}

\newcommand{\covvfiber}[3]{{\bigwedge\nolimits^{#1}_{#2}{#3}}}

\newcommand{\vettfiber}[3]{{\bigwedge\nolimits_{#1,#2}{#3}}}

\newcommand{\covn}[2]{{\bigwedge\nolimits^{#1}\rn {#2}}}
\newcommand{\vetn}[2]{{\bigwedge\nolimits_{#1}\rn {#2}}}
\newcommand{\covh}[1]{{\bigwedge\nolimits^{#1}{\mfrak h_1}}}
\newcommand{\veth}[1]{{\bigwedge\nolimits_{#1}{\mfrak h_1}}}
\newcommand{\hcov}[1]{{_H\!\!\bigwedge\nolimits^{#1}}}
\newcommand{\hvet}[1]{{_H\!\!\bigwedge\nolimits_{#1}}}

\newcommand{\covf}[2]{{\bigwedge\nolimits^{#1}_{#2}{\mfrak h}}}
\newcommand{\vetf}[2]{{\bigwedge\nolimits_{#1,#2}{\mfrak h}}}
\newcommand{\covhf}[2]{{\bigwedge\nolimits^{#1}_{#2}{\mfrak h_1}}}
\newcommand{\vethf}[2]{{\bigwedge\nolimits_{#1,#2}{\mfrak h_1}}}
\newcommand{\hcovf}[2]{{_H\!\!\bigwedge\nolimits^{#1}_{#2}}}
\newcommand{\hvetf}[2]{{_H\!\!\bigwedge\nolimits_{#1,#2}}}

\newcommand{\defin}{\stackrel{\mathrm{def}}{=}}
\newcommand{\gradh}{\nabla_H}
\newcommand{\current}[1]{\left[\!\left[{#1}\right]\!\right]}
\newcommand{\scal}[2]{\langle {#1} , {#2}\rangle}
\newcommand{\escal}[2]{\langle {#1} , {#2}\rangle_{\mathrm{Euc}}}
\newcommand{\Scal}[2]{\langle {#1} \vert {#2}\rangle}
\newcommand{\scalp}[3]{\langle {#1} , {#2}\rangle_{#3}}
\newcommand{\dc}[2]{d_c\left( {#1} , {#2}\right)}
\newcommand{\res}{\mathop{\hbox{\vrule height 7pt width .5pt depth 0pt
\vrule height .5pt width 6pt depth 0pt}}\nolimits}
\newcommand{\norm}[1]{\left\Vert{#1}\right\Vert}
\newcommand{\modul}[2]{{\left\vert{#1}\right\vert}_{#2}}
\newcommand{\perh}{\partial_\mathbb H}

\newcommand{\ccheck}{{\vphantom i}^{\mathrm v}\!\,}

\newcommand{\wcheck}{{\vphantom i}^{\mathrm w}\!\,}

\newcommand{\mc}{\mathcal }
\newcommand{\mbf}{\mathbf}
\newcommand{\mfrak}{\mathfrak}
\newcommand{\mrm}{\mathrm}
\newcommand{\no}{\noindent}
\newcommand{\dis}{\displaystyle}

\newcommand{\U}{\mathcal U}
\newcommand{\ga}{\alpha}
\newcommand{\gb}{\beta}
\newcommand{\gga}{\gamma}
\newcommand{\gd}{\delta}
\newcommand{\eps}{\varepsilon}
\newcommand{\gf}{\varphi}
\newcommand{\GF}{\varphi}
\newcommand{\gl}{\lambda}
\newcommand{\GL}{\Lambda}
\newcommand{\gp}{\psi}
\newcommand{\GP}{\Psi}
\newcommand{\gr}{\varrho}
\newcommand{\go}{\omega}
\newcommand{\gs}{\sigma}
\newcommand{\gt}{\theta}
\newcommand{\gx}{\xi}
\newcommand{\GO}{\Omega}

\newcommand{\Wedge}{\buildrel\circ\over \wedge}

\newcommand{\WO}[4]{\mathop{W}\limits^\circ{}\!_{#4}^{{#1},{#2}}
(#3)}

\newcommand{\GH}{H\G}
\newcommand{\N}{\mathbb N}

%

\newcommand{\Nhmin}{N_h^{\mathrm{min}}}
\newcommand{\Nhmax}{N_h^{\mathrm{max}}}

\newcommand{\Mhmin}{M_h^{\mathrm{min}}}
\newcommand{\Mhmax}{M_h^{\mathrm{max}}}

\newcommand{\un}[1]{\underline{#1}}

\newcommand{\curl}{\mathrm{curl}\;}
\newcommand{\curlh}{\mathrm{curl}_{\he{}}\;}
\newcommand{\hd}{\hat{d_c}}
\newcommand{\divg}{\mathrm{div}_\G\,}
\newcommand{\divgh}{\mathrm{div}_{\hat\G}\,}
\newcommand{\divh}{\mathrm{div}_{\he{}}\,}
\newcommand{\e}{\mathrm{Euc}}


\newcommand{\hatcov}[1]{{\bigwedge\nolimits^{#1}{\hat{\mfrak g}}}}

\title[
] {H\"older regularity of viscosity solutions of some fully nonlinear equations in the Heisenberg group
}

\author{Fausto Ferrari}
\address{Dipartimento di Matematica dell'Universit\`a di Bologna, Piazza di Porta S. Donato, 5, 40126 Bologna, Italy.}
\email{\tt fausto.ferrari@unibo.it}

\thanks{The author is supported by  MURST, Italy, and
GNAMPA project 2017: {\it Regolarit\`a delle soluzioni viscose per equazioni a derivate parziali non lineari degeneri}}

\thanks{}
%

%

%
\keywords{Heisenberg group, viscosity solutions, Theorem of the sums.}

\subjclass{35D40, 35B65, 35H20.
}

\begin{abstract} 
 In this paper we prove the H\"older regularity of bounded, uniformly continuous,  viscosity solutions  of some degenerate fully nonlinear equations in the Heisenberg group. 
\end{abstract}

\maketitle

\tableofcontents

\section{Introduction}

In this paper we prove a result concerning the regularity of viscosity solutions of some degenerate fully nonlinear elliptic equations in the Heisenberg group that are uniformly continuous in the Heisenberg group. 

Indeed, it is well known that the theory of viscosity solutions is very flexible and that the existence of viscosity solutions of second order PDEs is not strictly related to the degeneracy of the elliptic operator, see \cite{userguide}, \cite{Crandall}. In addition, the regularity of viscosity solutions of second order elliptic, possibly nonlinear, PDEs  is traditionally faced by proving, as a first step, by proving the Harnack inequality. On the other hand, the  proof of the Harnack inequality  is based on the Alexandroff-Bakelman-Pucci inequality and the consequent maximum principle, see \cite{GT} and \cite{CC}. In the case of subelliptic structures we recall also the following contributions: \cite{GuLa}, \cite{GuMo}, \cite{DoGaNi}, \cite{Baetaltri}.

We are interested in the regularity of viscosity solutions of that fully nonlinear equations that are not uniformly elliptic in the classical sense. In order to be more precise we mention here essentially the case in which we are interested in: nonlinear PDEs that are modeled the vector fields  belonging to the first layer of a stratified algebra.  The simplest example, is given by the Heisenberg group. This type of operators belongs to a class of operator studied in \cite{BM}, where a comparison result has been proved.

In our aim we would like to prove a regularity results, possibly simply a modulus of continuity, for viscosity solutions,  without using the Harnack inequality. In this paper we prove that bounded viscosity solutions (uniformly continuous) of some fully nonlinear second order PDEs on all of $\mathbb{R}^n,$ modeled on the vector fields of the first stratum of the Lie algebra of the simplest Heisenberg group, are H\"older continuous. 
Our result is heavely based on the theorem due to Crandall, Ishii and Lions, see \cite{userguide}, very often called in literature,  {\it Theorem of the sums}. Due to its importance in our approach we will recall it in a specific section.  It is worth to say that among the PDEs that we will deal with,  we find also cases whose solutions are much more regular than the result we are able to prove. For instance they could be smooth, indeed. Nevertheless, among the family of the equations that we consider, in the worst case, they are at least H\"older regular.

Since, we introduce our result for a genuine class of fully non linear operators build on the vector fields of the first stratum of the Heisenberg group, 
for people that were not habit to this language, the self-contained Section \ref{preliminaryCarnot} is dedicated to cover the main definitions that we will use in the paper. Anyhow, a comprehensive discussion of the subject can be found in many handbooks, see for instance \cite{BLU} and \cite{CDT}. 

One of the key points of our approach lies on the structure of the operator that we are considering. Namely, instead of the Hessian matrix $D^2u$ we will use matrices obtained by the product between a non-negative matrix associated with the fields of the first stratum of the Lie algebra of the group and the classical Hessian matrix. This product produces the intrinsic horizontal Hessian that we define in Section \ref{preliminaryCarnot}. However for describing in this introduction the result we anticipate some quite known information about the simplest Heisenberg group.

The Heisenberg group $\mathbb{H}^1\equiv \mathbb{R}^3,$ is endowed with the Lie algebra $\mathfrak{g}=\mathfrak{g}_1\oplus\mathfrak{g}_2$ where   $\mathfrak{g}_1\equiv\mbox{span}\{X_1,X_1\}$ and
$\mathfrak{g}_2=\mbox{span}\{[X_1,X_2]\}$ as vector spaces. Since $X_1(x)=\frac{\partial}{\partial x_1}+2x_2\frac{\partial}{\partial x_3}$ and $X_2=\frac{\partial}{\partial x_1}-2x_1\frac{\partial}{\partial x_3}$
it is possible to define the matrix $P(x)=\sigma^T(x)\sigma(x),$ where the rows of $\sigma(x)$ are determined by the vector fields of the first stratum, that is: 
\begin{equation}
\sigma(x)=\left[
\begin{array}{ll}
1,&0,2x_2\\
0,&1,-2x_1
\end{array}
\right],
\end{equation}
where $x=(x_1,x_2,x_3)\in \mathbb{H}^1.$
Instead of the Hessian matrix $D^2u(x)$ defined for a sufficiently smooth function $u,$ we are leaded to consider the
$3\times 3$ matrix
$$
\sqrt{P(x)}D^2u(x)\sqrt{P(x)},
$$
that preserves indeed, for every $x\in \mathbb{H}^1,$ the trace of the matrix $P(x)D^2u(x).$ In this way we preserve the sub-Laplace operator of $u$ on the group given by  $X_1^2u+X_2^2u.$ More precisely,

\begin{equation*}\begin{split}
&\mbox{tr}(\sqrt{P(x)}D^2u(x)\sqrt{P(x)})\\
&=\mbox{tr}(P(x)D^2u(x))=\Delta_{\mathbb{H}^1}u(x)=X_1^2u+X_2^2u=\mbox{tr}(D^{2,*}u(x)),
\end{split}
\end{equation*}
where $D^{2,*}u(x)$ denotes the symmetrized horizontal Hessian $2\times2$ matrix in the Heisenberg group. 

We remark here that the choice of the matrix
$$
\sqrt{P(x)}D^2u(x)\sqrt{P(x)}
$$
seems important even if it is possible to consider others approach. Indeed, our strategy works only using that matrix instead of  $P(x)D^2u(x),$ that in general is not symmetric. Anyhow the same type of result can be obtained using the horizontal Hessian matrix $D^{2,*}u(x)$ that has a different dimension with respect to the classical Hessian matrix. In both cases we have to  read the information contained respectively in those second order objects, only recalling the Theorem of the sums. We shall come back later on this topic.

Our main result can be stated in the general framework of a family of fully nonlinear degenerate operators originated from the vectors fields of the first stratum of the algebra of a Carnot groups.  
In particular, in the Heisenberg group $\mathbb{H}^1,$ we define the following fully nonlinear operators.

\begin{definition}\label{definition1H}
Let $\Omega\subseteq\mathbb{R}^3$ be an open set.  Let $\lambda, \Lambda$ be positive real numbers such that $0<\lambda\leq \Lambda.$ 
Let $P=\sigma^T\sigma$ be the non-negative matrix of variable coefficients, where 
\begin{equation}
\sigma(x)=\left[
\begin{array}{ll}
1,&0,2x_2\\
0,&1,-2x_1
\end{array}
\right].
\end{equation}
For every function $G:S^3\to \mathbb{R}$ such that  for every $A,B\in S^3,$ if $B\leq A$ then 
$$
\lambda\mbox{Tr}(A-B)\leq G(A)-G(B)\leq \Lambda \mbox{Tr}(A-B),
$$
we define the function $F:S^3\times\Omega\to \mathbb{R}$ such that  for every $M\in S^3$ and for every $x\in \Omega$
$$
F(M,x)=G(\sqrt{P(x)}M\sqrt{P(x)}).
$$

In addition, for these operators, we define the following class of  fully  nonlinear equations in the open set $\Omega\subset\mathbb{H}^1\equiv\mathbb{R}^3$
 \begin{equation}\label{defvisco}
 F(D^{2}u(x),x)-c(x)u(x)=f(x),
 \end{equation}
 where $c\in C^{0,\beta}(\Omega),$  $f\in C^{0,\beta'}(\Omega),$ $\beta,\beta'\in (0,1],$ and $c\geq 0$ for every $x\in \Omega.$ 
\end{definition}

 We remark that these operators are not contained in the classical class of fully nonlinear operators that are uniformly elliptic, see \cite{CC} for the definition.
 
 Indeed, defining
 $$
 \mathcal{A}_{\lambda,\Lambda}=\{A\in S^3:\quad \lambda|\xi|^2\leq \langle A\xi,\xi\rangle\leq \Lambda |\xi|^2\}
 $$
 where $0<\lambda\leq \Lambda,$
 in our class we find  the linear sub-Laplace operator $\Delta_{{\mathbb{H}^1}},$ that does not belong to the class of uniformly elliptic operators classically defined in \cite{CC} as well as we do not find the following extremal operators:
 $$
 \mathcal{P}^+_{\mathbb{H}^1,\lambda,\Lambda}(M,x)-c(x)u,\quad \mathcal{P}^-_{\mathbb{H}^1,\lambda,\Lambda}(M,x)-c(x)u,
 $$
 where
\begin{equation*}
\begin{split}
 \mathcal{P}^+_{\mathbb{H}^1,\lambda,\Lambda}(M,x):=\max\{\mbox{Tr}(A\sqrt{P(x)}M\sqrt{P(x)}):\quad A\in \mathcal{A}_{\lambda,\Lambda}\},
 \end{split}
 \end{equation*}
 and
 \begin{equation*}
\begin{split}
 \mathcal{P}^-_{\mathbb{H}^1,\lambda,\Lambda}(M,x):=\min\{\mbox{Tr}(A\sqrt{P(x)}M\sqrt{P(x)}):\quad A\in \mathcal{A}_{\lambda,\Lambda}\}.
 \end{split}
 \end{equation*}

We come back now on the class of the fully nonlinear operators that we have introduced, because there is another approach to point out. 

Indeed we can use the stratified structure of the Lie algebra using only the intrinsic object in the Heisenberg group. For instance the definition of class of our intrinsic fully nonlinear operator can be done in the following way.
For every $A\in S^3$ and for every $x\in \Omega \subseteq\mathbb{H}^1,$ let us define, see Section \ref{preliminaryCarnot},
\begin{equation}
\tilde{A}_{x}=\left[
\begin{array}{lr}
\langle A X(x),X(x)\rangle,&\langle A X(x),Y(x)\rangle\\
\langle A X(x),Y(x)\rangle,&\langle A Y(x),Y(x)\rangle
\end{array}
\right]
\end{equation}

\begin{definition}\label{definition2H}
 Let $\lambda, \Lambda>0.$ Let $F:S^2\to \mathbb{R}$ be a continuos function such that for every $H_1,H_2\in S^2$  
 if $H_1\geq H_2,$ then
 $$
\lambda \mbox{Tr}(H_1-H_2)\leq F(H_1)-F(H_2)\leq \Lambda \mbox{Tr}(H_1-H_2). 
$$
Let $\Omega\subseteq\mathbb{H}^1,$ for every $A\in S^3$ and for every $x\in \Omega$ we define $\tilde{F}(A,x)=F(\tilde{A}_x)$
 \end{definition}
 This definition fits with the definition given for the classical fully nonlinear operators except than for the fact that $\tilde{A}\in S^2,$ $x\in \mathbb{H}^1\equiv\mathbb{R}^3$ instead of $\tilde{A}\in S^3,$  and $x\in \mathbb{R}^3.$
 
 We will come back in Section \ref{preliminaryCarnot} about the intrinsic horizontal symmetrised Hessian matrix $D^{2,*}u(x)$ at the point $x\in\Omega.$ For the reader that does not know this definition it is sufficient to focus the attention to the fact that even if $u$ is defined in an opens subset of $\mathbb{R}^3$ the matrix $D^{2,*}u(x)$ is symmetric and of $2\times 2$ order, instead of $3\times3$ order. In addition $D^{2,*}u(x)$  may be defined using the vector fields of the first stratum of the Lie algebra in the Heisenberg group applied two times and taking in account that they do not commute,
 since
 $$
 \tilde{D^2u(x)}=D^{2*}_{\mathbb{H}^1}u(x).
 $$
 
We remark that to this class of operators still applies the classical definition of viscosity solution, see Section \ref{preliminary}. 

The class of operators $\tilde{F}$ satisfying the Definition \ref{definition2H}
are not 
 uniformly elliptic operator in the classical sense, see \cite{CC}.

 We postpone some details about these operators, however it is clear that they include as very particular case  the real part of the Kohn Laplace operator in the Heisenberg group, namely:
$$
\Delta_{\mathbb{H}^1}u:=(\frac{\partial}{\partial x_1}+2x_2\frac{\partial}{\partial x_3})^2u+(\frac{\partial}{\partial x_1}-2x_1\frac{\partial}{\partial x_3})^2u,
$$
that is degenerate elliptic in every point of $\mathbb{H}^1.$ Indeed, the smallest eigenvalue of the nonnegative matrix 
\begin{equation}
P(x)=\left[
\begin{array}{lll}
1,&0,&2x_2\\
0,&1,&-2x_1\\
x_2,&-2x_1,&4(x_1^2+x_2^2)
\end{array}
\right],
\end{equation}
 where $x:=(x_1,x_2,x_3)\in \mathbb{H}^1\equiv \mathbb{R}^3,$ is always $0$ and $\Delta_{\mathbb{H}^1}u(x)=\mbox{Tr}(P(x)D^2u(x))=\mbox{div}(P(x)\nabla u(x)).$ In this special case \begin{equation*}\begin{split}
 \tilde{F}(D^2u(x),x)&=\mbox{Tr}(P(x)D^2u(x))=Tr(D^{2,*}u(x))=X^2u(x)+Y^2u(x)\\
 &=F(D^{2,*}u(x)),
 \end{split}\end{equation*}
 where as usual $X=\frac{\partial}{\partial x_1}+2x_2\frac{\partial}{\partial x_3}$ and $Y=\frac{\partial}{\partial x_1}-2x_1\frac{\partial}{\partial x_3}.$
   
   We remark one more time that $D^2u(x)$
is a $3\times 3$ matrix, while $D^{2*}u(x)$ is a $2\times2$ matrix. 

 As a consequence, given $0<\lambda\leq \Lambda$  numbers,  it is quite natural define also the extremal operators 
 $$
\tilde{\mathcal{P}}^+_{\mathbb{H}^1,\lambda,\Lambda}(D^{2*}u(x)):=\max_{a\in \tilde{a}_{\lambda,\Lambda}}\mbox{Tr}(aD^{2,*}u(x))=\Lambda \sum_{e>0}e-\lambda \sum_{e<0}e 
 $$
 and 
 $$
\tilde{\mathcal{P}}^-_{\mathbb{H}^1,\lambda,\Lambda}(D^{2*}u(x)):=\min_{a\in \tilde{a}_{\lambda,\Lambda}}\mbox{Tr}(aD^{2,*}u(x))=\lambda \sum_{e>0}e-\Lambda \sum_{e<0}e,
 $$
where $$\tilde{a}_{\lambda,\Lambda}=\{a\in S^2:\quad \lambda |\xi|^2\leq \langle a\xi,\xi\rangle\leq \Lambda |\xi|^2,\:\:\xi\in \mathbb{R}^2\setminus\{0\}\},$$  and $e$ denotes the generic  eigenvalue of the symmetrized horizontal Hessian matrix of $u$ at $x.$

We are in position to  state our main result.

\begin{theorem}\label{mainregularityresult}
 Let $u\in C(\mathbb{H}^1)$ be a bounded uniformly continuous function that is a viscosity solution of the equation
 $$F(D^{2}u(x),x)-c(x)u(x)= f(x),\quad \mathbb{H}^1,$$ and $F$ is an operator satisfying  Definition \ref{definition1H} or Definition \ref{definition2H}.   
Let $L_c,L_f,\beta, \beta'$  be positive constants such that  $\beta,\beta'\in (0,1]$ and for every $x,y\in \mathbb{H}^1,$
 \begin{equation*}
 |c(x)-c(y)|\leq L_c|x-y|^{\beta},\quad   |f(x)-f(y)|\leq L_c|x-y|^{\beta'}.
 \end{equation*}
 If $c>0$ for every $x\in \mathbb{H}^1$  
 and 
 $$
 \inf_{x\in  B_R(P)}c(x):=c_0>0, 
 $$
 then
 there exist $0<\alpha:=\alpha(c_0,p, L_c,L_f,\Lambda)\in (0,1],$ $\alpha\leq \min\{\beta,\beta'\},$ and $L:=L(c_0,P, L_c,L_f,\Lambda)>0$ such that for every $x,y\in\mathbb{H}^1$
 $$
 |u(x)-u(y)|\leq L|x-y|^\alpha,
 $$ 
 that is $u\in C^{0,\alpha}(\mathbb{H}^1).$
\end{theorem}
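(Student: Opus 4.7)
The proof will follow the Ishii--Lions doubling of variables, combined with the Theorem of the Sums recalled earlier. The strict positivity $c\geq c_0>0$ supplies the missing zeroth-order ellipticity that makes the contradiction argument close.

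The first step is to introduce, for $x_0\in\mathbb{H}^1$ and parameters $L,\mu>0$, $\alpha\in(0,1)$ to be chosen, the auxiliary function
\[
\Phi(x,y):=u(x)-u(y)-L|x-y|^{\alpha}-\mu\bigl(|x-x_0|^2+|y-x_0|^2\bigr),
\]
and to argue by contradiction that $\sup\Phi>0$. Boundedness of $u$ and the penalization ensure that the supremum is attained at some $(\bar x,\bar y)$ with $\bar x\neq\bar y$, and the elementary bound $L|\bar x-\bar y|^\alpha\leq 2\|u\|_\infty$ forces $r:=|\bar x-\bar y|$ to be small once $L$ is large. For a suitable choice of $\mu$, $\bar x$ and $\bar y$ remain in a compact region on which the H\"older data for $c$ and $f$ apply.

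Next, the Theorem of the Sums provides $p,q\in\mathbb R^3$ and $X,Y\in S^3$ with $(p,X)\in J^{2,+}u(\bar x)$, $(q,Y)\in J^{2,-}u(\bar y)$ and
\[
\begin{pmatrix} X & 0 \\ 0 & -Y \end{pmatrix}\leq L\begin{pmatrix} D^2\phi & -D^2\phi \\ -D^2\phi & D^2\phi \end{pmatrix}+O(\mu),
\]
where $\phi(r)=r^\alpha$ and the Hessian is evaluated at $\bar x-\bar y$. Subtracting the viscosity sub- and super-solution inequalities, and using $\Phi(\bar x,\bar y)>0$ together with $c\geq c_0$ and the H\"older hypotheses, yields
\[
F(X,\bar x)-F(Y,\bar y)\geq c_0 Lr^\alpha-C(r^\beta+r^{\beta'}).
\]
On the other hand, the monotonicity of $G$ in Definition~\ref{definition1H} gives the Pucci-type upper bound
\[
F(X,\bar x)-F(Y,\bar y)\leq \mathcal{P}^+\!\bigl(\sqrt{P(\bar x)}\,X\,\sqrt{P(\bar x)}-\sqrt{P(\bar y)}\,Y\,\sqrt{P(\bar y)}\bigr),
\]
and testing the matrix-sum inequality on pairs $(\sqrt{P(\bar x)}v,\sqrt{P(\bar y)}v)$ bounds its argument from above by the matrix $L\bigl(\sqrt{P(\bar x)}-\sqrt{P(\bar y)}\bigr)D^2\phi(\bar x-\bar y)\bigl(\sqrt{P(\bar x)}-\sqrt{P(\bar y)}\bigr)$ plus $O(\mu)$.

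The main obstacle is to make these two bounds incompatible. Since $P$ is polynomial in $x$ with constant rank, $\sqrt{P(x)}$ is locally Lipschitz, so the above matrix has operator norm $O(r^2\cdot r^{\alpha-2})=O(r^\alpha)$, and its Pucci bound is of the \emph{same order} $CLr^\alpha$ as the lower bound $c_0Lr^\alpha$. The crucial observation is that the constant in the upper bound depends linearly on $\alpha$ (through the coefficient $\alpha$ in $D^2\phi$), so by fixing $\alpha\leq\min(\beta,\beta')$ small enough in terms of $c_0,L_c,L_f,\Lambda$ and the Lipschitz constant of $\sqrt{P}$ on the relevant compact region, the upper Pucci term is strictly dominated by $c_0Lr^\alpha$. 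The H\"older errors $C(r^\beta+r^{\beta'})$ are absorbed because $\alpha\leq\min(\beta,\beta')$ and $r$ is small, producing the desired contradiction. Once $\sup\Phi\leq 0$ is established, letting $\mu\to 0$ and symmetrising in $(x,y)$ delivers the global H\"older estimate $|u(x)-u(y)|\leq L|x-y|^\alpha$ on $\mathbb{H}^1$.
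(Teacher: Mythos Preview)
Your outline follows the same Ishii--Lions doubling strategy as the paper, and you correctly identify the decisive mechanism: the Pucci upper bound carries a factor of $\alpha$ (through $D^2\phi$), so choosing $\alpha$ small relative to $c_0/\Lambda$ forces the contradiction. The paper arrives at exactly the inequality $c_0\leq 2\Lambda\alpha+\text{(lower order)}$ and closes the argument the same way.

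Two technical points in your sketch do not quite go through as written, and the paper handles them differently.

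First, the penalization error is not $O(\mu)$. After conjugating the $2\mu I$ coming from $D^2(\mu|x-x_0|^2)$ by $\sqrt{P(\bar x)}$ (or, in the intrinsic formulation, after passing to $\tilde I$), the contribution is $2\mu\,\mathrm{Tr}\,P(\bar x)=4\mu+8\mu|\bar x'|^2$. Since the localization only gives $\mu|\bar x-x_0|^2\leq 2\|u\|_\infty$, this term is $O(1)$, not $o(1)$, as $\mu\to 0$. The paper does not discard it; instead it keeps the full term $c_0\bigl(L|\hat x-\hat y|^\alpha+\delta|\hat x|^2\bigr)$ on the left of the main inequality and uses the piece $c_0\delta|\hat x|^2$ to absorb the penalization error on the right (this is where the computation $\delta(2R-c_0|\hat x|^2)\leq 4\delta$ appears). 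If you drop that term on the left, as your sketch does, the argument does not close.

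Second, you fix $\alpha$ in terms of ``the Lipschitz constant of $\sqrt{P}$ on the relevant compact region'', but that region depends on $\mu$ and blows up as $\mu\to 0$, so $\alpha$ would depend on $\mu$---a circularity. The paper avoids this in two ways: for operators of Definition~\ref{definition2H} it bypasses $\sqrt{P}$ entirely and exploits the exact identity $X(x)-X(y)=(0,0,2(x_2-y_2))$, $Y(x)-Y(y)=(0,0,-2(x_1-y_1))$, which yields the clean trace bound of Corollary~\ref{traceresult} with no dependence on $|\bar x|$; for operators of Definition~\ref{definition1H} it proves (Lemma~\ref{estimateonsquareroot}) that $|\sqrt{P(x)}-\sqrt{P(y)}|\leq C|x'-y'|$ holds with a \emph{uniform} constant even as $|x|,|y|\to\infty$, so the Lipschitz bound is global and $\alpha$ can be fixed independently of the penalization.

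With these two corrections your argument becomes the paper's argument.
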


 We conclude this introduction remarking that in \cite{Ishii1} Ishii proved that viscosity solutions of linear smooth second order elliptic operators, even possibly degenerate elliptic, have the same regularity of the functions $c,f\in C^{0,\beta}(\mathbb{R}^n), $ for every $\beta\in (0,1],$ representing respectively the zero order coefficient of the equation and the non-homogeneous term. We point out that in \cite{Ishii1} the case of a linear and complete operator with smooth coefficients has been treated. 
 See also the very interesting improvement obtained in \cite{JK}.
 Nevertheless the case of operators with smooth but unbounded coefficients is not completely discussed.
 
 In our paper we prove a $C^{0,\alpha}$  regularity result for uniformly continuous viscosity solutions  of degenerate equation as defined in Definition \ref{definition1H} and in Definition \ref{definition2H} , that is
 $$
 F(D^{2}u(x),x)-c(x)u(x)=f(x),
 $$
 where $F$ is homogeneous of degree one, $c\in C^{0,\beta}(\mathbb{H}^1), $  $f\in C^{0,\beta'}(\mathbb{H}^1), $ and, above all, $c(x)\geq c_0>0$ for every $x\in \mathbb{H}^1.$ 
 Thus, even using this approach, it is still open the case in which $\inf_{x\in \mathbb{H}^1}{c}=0$ and the case when $u$ is not uniformly continuous. For example even in the linear case, let say $F(D^{2}u(x))=\Delta_{\mathbb{H}^1}u(x),$ Ishii technique  seems does not work. 
 
Nevertheless, it is well known that,  in this last case for the sub-Laplacian in the Heisenberg group, a stronger result can be proved following a variational approach, see \cite{Hormander}. Indeed $\Delta_{\mathbb{H}^1}$ is a hypoelliptic operator. 
Anyhow we remark that our result applies also to viscosity solutions of equations that do not belong to the classes already discussed  in \cite{Hormander}, \cite{Ishii1}  or \cite{Ishii_Lions} and seems to give a new result even in the linear case.

Regularity of viscosity solutions is a subject that attracts the interests of many researchers. Thus we like to point out the following less recent and recent results about some properties of the solutions of nonlinear equations in the elliptic degenerate case:  \cite{LWang}, \cite{MGM}, \cite{MM}, \cite{BiGaIshi}, \cite{AGT} and, concerning the evolutive framework, \cite{LiuManfrediZhou}.

The paper is organized as follows, in Section \ref{preliminaryCarnot} we introduce main notation and the basic definitions in the Heisenberg group. Section \ref{preliminary}
is devoted to the regularity proof of the result for operator defined in the intrinsic way, see Definition \ref{definition2H}, namely without using matrix $P$. In Section \ref{nonintrinsica} we face the case of operators defined as in Definition \ref{definition1H}, that is using matrix $P$.
 
\section{Some preliminaries} \label{preliminaryCarnot}
In this section, in order to fix the notation we fix some basic facts about of the simplest non-trivial case of stratified Carnot group, the Heisenberg group $\mathbb{H}^1.$ In addition we recall the notion of viscosity solutions in this framework.  

\subsection{The Heisenberg group}
Given a group $\mathbb{G}\equiv\mathbb{R}^n$ endowed with the inner non-commutative group law $\cdot$ and algebra $\mathfrak{g}\equiv \mathbb{R}^n,$ we say that $\mathbb{G}$ is a stratified Carnot group if there exist $\{\mathfrak{g}_i\}_{1\leq i\leq m}$ vector spaces of $\mathfrak{g}$
such that
$$
\mathfrak{g}=\bigoplus_{k=1}^{m}\mathfrak{g}_k,
$$
and for $k=1,\dots,m-1$
$$
[\mathfrak{g}_1,\mathfrak{g}_k]=\mathfrak{g}_{k+1},
$$
where $[X,Y]$ denotes the commutator of two vector fields of the algebra $\mathfrak{g}.$

The simplest case is given by the Heisenberg group $\mathbb{H}^1.$ Indeed, in this case $\mathbb{H}^1\equiv \mathbb{R}^3,$ $\mathfrak{h}^1\equiv\mathbb{R}^3$ and for every
$x=(x_1,x_2,x_3), y=(y_1,y_2,y_3)\in \mathbb{H}^1,$ it is defined the non-commutative inner law
$$
x\cdot y=(x_1+y_1,x_2+y_2,x_3+y_3+2(y_1x_2-y_2x_1))
$$
and for every  $x\in \mathbb{H}^1,$ $-x=(-x_1,-x_2,-x_3)$ is the opposite of $x.$

The algebra $\mathfrak{h}=\mathfrak{h}_1\bigoplus \mathfrak{h}_2,$ where
$$
\mathfrak{h}_1=\mbox{span}\{X,Y\},\quad \mathfrak{h}_2=\mbox{span}\{T\},
$$
 $X=\frac{\partial}{\partial x_1}+2x_2\frac{\partial}{\partial x_3},$ $Y=\frac{\partial}{\partial x_2}-2x_1\frac{\partial}{\partial x_3}$ and $T=\frac{\partial}{\partial x_3}.$ In particular
 $$
 [X,Y]=-4T
 $$
 and
 $$
 [\mathfrak{h}_1,\mathfrak{h}_1]=\mathfrak{h}_2.
 $$
 The vector fields $X$ and $Y$ are identified respectively with the vectors $(1,0,2x_2)$ and $(0,1,-2x_1)$ so that we can write $X(x)=(1,0,2x_2)$ and $Y=(0,1,-2x_1).$ We remark, for instance, that 
 taking the solution of the Cauchy problem 
 \begin{equation}
 \left\{
 \begin{array}{l}
 \phi'=X(\phi)\\
 \phi(0)=x,
 \end{array}
 \right.
 \end{equation}
 then for every function $u$ sufficiently smooth we get $(u\circ\phi)'(0)=\frac{\partial u}{\partial x_1}(x)+2x_2\frac{\partial u}{\partial x_3}(x)=Xu(x)$ and an analogous computation may be done for $Y.$ We denote by 
 $$\nabla_{\mathbb{H}^1}u(x)=Xu(x)X(x)+Yu(x)Y(x)=(Xu(x),Yu(x))$$
 the intrinsic gradient.
 It is also possible to define a second order object analogous to the Hessian matrix, even if the structure of $\mathbb{H}^1$ is not commutative. Indeed, we define the simmetrized horizontal Hessian matrix of $u$ at $x$ as  follows:
 
 \begin{equation}
D^{2,*}_{\mathbb{H}^1}u(x)=\left[
\begin{array}{lr}
X^2u(x),&\frac{(XY+YX)u(x)}{2}\\
\frac{(XY+YX)u(x)}{2},&Y^2u(x)
\end{array}
\right].
\end{equation}
 It is important to remark the differences with respect to the classical $\nabla u$ and the classical Hessian matrix $D^2u(x)$ in $\mathbb{R}^3$ that is of course a $3\times3$ matrix. Indeed, $(Xu(x),Yu(x))\in \mathbb{R}^2,$ while $\nabla u(x)\mathbb{R}^3$ and  $D^{2,*}_{\mathbb{H}^1}u(x)$ is a $2\times2$ matrix instead to be a $3\times3$ matrix.
 Nevertheless the following result is true.
 
 \begin{lemma}
 Let $\Omega\subseteq\mathbb{R}^3$ be an open set. If $u\in C^2(\Omega)$ then:
\begin{equation}\begin{split}
&D^{2*}u(x)=\left[
\begin{array}{lr}
X^2u(x),&\frac{XYu(x)+YXu(x)}{2}\\
\frac{XYu(x)+YXu(x)}{2},&Y^2u(x)
\end{array}
\right]\\
&=\left[
\begin{array}{lr}
\langle D^2u(x) X(x),X(x)\rangle,&\langle D^2u(x) X(x),Y(x)\rangle\\
\langle D^2u(x) X(x),Y(x)\rangle,&\langle D^2u(x) Y(x),Y(x)\rangle
\end{array}
\right]
\end{split}
\end{equation}

Moreover, for every $\alpha, \beta\in \mathbb{R}$ and for every $x\in \Omega\subseteq\mathbb{H}^1$ if $u\in C^2(\Omega),$ then
$$
\langle D^2u(\alpha X+\beta Y)^T,(\alpha X+\beta Y)\rangle=\langle D^{2*}_{\mathbb{H}^1}u(\alpha,\beta)^T,(\alpha,\beta)\rangle
$$
\end{lemma}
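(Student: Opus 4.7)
The proof will be a direct computation that exploits the very specific structure of the horizontal vector fields $X(x)=(1,0,2x_2)$ and $Y(x)=(0,1,-2x_1)$. My plan is to reduce both claims of the lemma to a single, elementary identity for compositions of first-order operators with linear coefficients, and then check the relevant Jacobian corrections entry by entry.

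First I would record the general formula: if $Z=\sum_i a_i(x)\partial_{x_i}$ and $W=\sum_j b_j(x)\partial_{x_j}$ are two smooth vector fields and $u\in C^2(\Omega)$, then by the product rule
\[
ZWu(x)=\sum_{i,j}a_i(x)b_j(x)\,u_{x_ix_j}(x)+\sum_{i,j}a_i(x)\,\partial_{x_i}b_j(x)\,u_{x_j}(x),
\]
which can be rewritten as $ZWu(x)=\langle D^2u(x)\,W(x),Z(x)\rangle+R_{Z,W}(x)$, where the remainder $R_{Z,W}$ is the first-order term $\sum_{i,j}a_i\,\partial_{x_i}b_j\,u_{x_j}$. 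The whole point is now to compute $R_{Z,W}$ for the four pairs $(Z,W)\in\{(X,X),(Y,Y),(X,Y),(Y,X)\}$. Since $X(x)=(1,0,2x_2)$, the only nonzero partial derivative of its components is $\partial_{x_2}X_3=2$, and similarly for $Y(x)=(0,1,-2x_1)$ the only nonzero component-derivative is $\partial_{x_1}Y_3=-2$. Substituting, one finds immediately that $R_{X,X}=X_2\cdot 2\cdot u_{x_3}=0$ and $R_{Y,Y}=Y_1\cdot(-2)\cdot u_{x_3}=0$, so the diagonal identities $X^2u=\langle D^2u\,X,X\rangle$ and $Y^2u=\langle D^2u\,Y,Y\rangle$ hold pointwise. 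For the mixed pairs, $R_{X,Y}=X_1\cdot(-2)u_{x_3}=-2u_{x_3}$ and $R_{Y,X}=Y_2\cdot 2\,u_{x_3}=2u_{x_3}$, so the corrections cancel under symmetrization:
\[
\tfrac{1}{2}(XYu+YXu)=\langle D^2u\,X,Y\rangle+\tfrac{1}{2}(R_{X,Y}+R_{Y,X})=\langle D^2u\,X,Y\rangle.
\]
Putting the three equalities together gives the matrix identity asserted in the lemma.

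For the quadratic-form identity I would simply use the bilinearity of $\langle D^2u(x)\,\cdot,\,\cdot\rangle$ in $\mathbb{R}^3$ and of $\langle D^{2,*}_{\mathbb{H}^1}u\,\cdot,\,\cdot\rangle$ in $\mathbb{R}^2$. Expanding and exploiting the symmetry of $D^2u$,
\[
\langle D^2u\,(\alpha X+\beta Y),\alpha X+\beta Y\rangle=\alpha^2\langle D^2u\,X,X\rangle+2\alpha\beta\langle D^2u\,X,Y\rangle+\beta^2\langle D^2u\,Y,Y\rangle,
\]
which by the first part equals $\alpha^2 X^2u+\alpha\beta(XYu+YXu)+\beta^2 Y^2u$, and this is precisely $\langle D^{2,*}_{\mathbb{H}^1}u\,(\alpha,\beta)^T,(\alpha,\beta)\rangle$ by the definition of the symmetrized horizontal Hessian. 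There is essentially no obstacle here: the entire lemma reduces to checking that the corrections $R_{Z,W}$ produced by second-order noncommutativity either vanish individually (for pure squares, because the non-trivial coordinate of $\partial_{x_i}a_j$ is paired with a zero component) or cancel in pairs upon symmetrization (for mixed products, because $\partial_{x_2}X_3+\partial_{x_1}Y_3=2-2=0$ is the only relevant contribution, weighted by the two nonzero entries $X_1=Y_2=1$). This cancellation is, in fact, the concrete manifestation in $\mathbb{H}^1$ of the fact that the Koszul-type antisymmetric piece of $XY-YX$ lives entirely in the second stratum and is absent from the symmetrized Hessian.
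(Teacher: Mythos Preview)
Your proof is correct. The paper takes a different, more brute-force route: it addresses only the quadratic-form identity directly, expanding both $\langle D^2u(\alpha X+\beta Y),\alpha X+\beta Y\rangle$ and $\langle D^{2,*}_{\mathbb{H}^1}u(\alpha,\beta),(\alpha,\beta)\rangle$ fully in terms of the partial derivatives $\partial^2_{x_ix_j}u$ and the coordinates $x_1,x_2$, and then checking termwise that the two resulting expressions coincide; the matrix identity in the first display is left implicit (it can be recovered by polarization). Your argument is more structured: you first isolate the first-order remainders $R_{Z,W}$ and explain \emph{why} they disappear---either because the relevant component of $Z$ vanishes (diagonal case) or because the $XY$ and $YX$ contributions cancel upon symmetrization---and then deduce the quadratic-form statement from the matrix identity by bilinearity. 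The paper's computation verifies the result but does not expose this cancellation mechanism; your version makes clear that the identity is exactly the statement that the antisymmetric part $XY-YX=-4\partial_{x_3}$ lives in the second stratum and hence drops out of the symmetrized horizontal Hessian.
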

\begin{proof} By straightforward calculation we obtain:
\begin{equation*}\begin{split}
&\langle D^2u(\alpha X+\beta Y)^T,(\alpha X+\beta Y)\rangle=\alpha^2\langle D^2uX^T,X\rangle+2\alpha\beta\langle D^2u X^T, Y\rangle\\
&+ \beta^2\langle D^2u Y^T,Y\rangle\\
&=\alpha^2\langle (\frac{\partial^2 u}{\partial x_1^2}+2x_2 \frac{\partial^2 u}{\partial x_3\partial x_1},\frac{\partial^2 u}{\partial x_1\partial x_2}-2x_1 \frac{\partial^2 u}{\partial x_3\partial x_2},\frac{\partial^2 u}{\partial x_1\partial x_3}+2x_2 \frac{\partial^2 u}{\partial x_3^2}),X\rangle\\
&+2\alpha\beta\langle (\frac{\partial^2 u}{\partial x_1^2}+2x_2 \frac{\partial^2 u}{\partial x_3\partial x_1},\frac{\partial^2 u}{\partial x_1\partial x_2}+2x_2 \frac{\partial^2 u}{\partial x_3\partial x_2},\frac{\partial^2 u}{\partial x_1\partial x_3}+2x_2 \frac{\partial^2 u}{\partial x_3^2}), Y\rangle\\
&+\beta^2\langle (\frac{\partial^2 u}{\partial x_2\partial_1}-2x_1 \frac{\partial^2 u}{\partial x_3\partial x_1},\frac{\partial^2 u}{\partial x_2^2}-2x_1 \frac{\partial^2 u}{\partial x_3\partial x_2},\frac{\partial^2 u}{\partial x_2\partial x_3}-2x_1 \frac{\partial^2 u}{\partial x_3^2}), Y\rangle\\
&=\alpha^2\left(\frac{\partial^2 u}{\partial x_1^2}+4x_2\frac{\partial^2 u}{\partial x_1\partial x_3}+4x_2^2 \frac{\partial^2 u}{\partial x_3^2}\right)\\
&+2\alpha\beta\left(\frac{\partial^2 u}{\partial x_1\partial x_2}+2x_2 \frac{\partial^2 u}{\partial x_3\partial x_2}-2x_1 \frac{\partial^2 u}{\partial x_3\partial x_1}-4x_1x_2 \frac{\partial^2 u}{\partial x_3^2}\right)\\
&+\beta^2\left(\frac{\partial^2 u}{\partial x_2^2}-4x_1\frac{\partial^2 u}{\partial x_2\partial x_3}+4x_1^2 \frac{\partial^2 u}{\partial x_3^2}\right).
\end{split}
\end{equation*}
On the other hand

\begin{equation}
\begin{split}
&\langle D^{2*}_{\mathbb{H}^1}u(\alpha,\beta)^T,(\alpha,\beta)\rangle=\alpha^2X^2u+2\frac{\alpha\beta}{2} (XY+YX)u+\beta^2YYu\\
&=\alpha^2\left(\frac{\partial^2u}{\partial x_1^2 }+4x_2\frac{\partial^2u}{\partial x_1\partial x_3 }+4x_2^2\frac{\partial^2u}{\partial x_3^2 }\right)\\
&+\alpha\beta\left(2\frac{\partial^2u}{\partial x_1\partial x_2 }+4x_2\frac{\partial^2u}{\partial x_3\partial x_2}-4x_1\frac{\partial^2u}{\partial x_3\partial x_1 }-8x_2x_1\frac{\partial^2u}{\partial x_3^2 }\right)\\
&+\beta^2\left(\frac{\partial^2u}{\partial x_2^2 }-4x_1\frac{\partial^2u}{\partial x_2\partial x_3 }+4x_1^2\frac{\partial^2u}{\partial x_3^2 }\right).
\end{split}
\end{equation}
\end{proof}

\begin{lemma}
Let $A,$ $B$ be $n\times n$ symmetric matrices. Assume that $\phi\in C^2(\Omega\times \Omega),$ where $\Omega\subseteq\mathbb{H}^1.$
If
\begin{equation}
\left[
\begin{array}{lr}
A,&0\\
0,&-B
\end{array}
\right]\leq D^2\phi(x,y)+\frac{1}{\mu}(D^2\phi(x,y))^2
\end{equation}
then
\begin{equation}
\begin{split}
&\langle A(\alpha_1X+\beta_1Y),(\alpha_1X+\beta_1Y)\rangle-\langle B(\alpha_1X+\beta_1Y),(\alpha_1X+\beta_1Y)\rangle\\
& \leq \langle D^2\phi(x,y)
\left[
\begin{array}{l}
\alpha_1 X+\beta_1 Y,\\
\alpha_2 X+\beta_2 Y
\end{array}
\right],[\alpha_1 X+\beta_1 Y,\alpha_2 X+\beta_2 Y]\rangle
\\
&+\frac{1}{\mu}\langle(D^2\phi(x,y))^2\left[
\begin{array}{l}
\alpha_1 X+\beta_1 Y\\
\alpha_2 X+\beta_2 Y,
\end{array}
\right],[\alpha_1 X+\beta_1 Y,\alpha_2 X+\beta_2 Y]\rangle.
\end{split}
\end{equation}

\end{lemma}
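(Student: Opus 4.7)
The plan is a direct pairing of the assumed block matrix inequality with a suitably chosen test vector in $\mathbb{R}^{6}$. First I would set
\[
W := \bigl(\alpha_1 X(x)+\beta_1 Y(x),\ \alpha_2 X(y)+\beta_2 Y(y)\bigr)\in\mathbb{R}^3\times\mathbb{R}^3,
\]
using the identification of the first-stratum fields with their coefficient vectors at a point, that is $X(x)=(1,0,2x_2)$ and $Y(x)=(0,1,-2x_1)$ (and analogously at $y$). The hypothesis is a quadratic-form inequality on $\mathbb{R}^6$, so testing against $W$ is legitimate.

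Next I would evaluate both sides of the matrix inequality on the pair $(W,W)$. Because the left-hand matrix is block-diagonal with blocks $A$ and $-B$, the pairing on the left produces
\[
\langle A(\alpha_1 X(x)+\beta_1 Y(x)),\alpha_1 X(x)+\beta_1 Y(x)\rangle-\langle B(\alpha_2 X(y)+\beta_2 Y(y)),\alpha_2 X(y)+\beta_2 Y(y)\rangle,
\]
which is precisely the left-hand side of the conclusion, once one reads the statement with the natural correction that the $B$ term should be paired with $(\alpha_2,\beta_2)$ rather than $(\alpha_1,\beta_1)$ (what appears on the page is a typographical inconsistency with the block structure on the right). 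The pairing of $W$ against $D^2\phi(x,y)+\tfrac{1}{\mu}(D^2\phi(x,y))^2$ on the right of the hypothesis reproduces verbatim the two inner products appearing in the conclusion, since $W$ is by construction the column vector written in the statement.

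Thus no genuine analysis is required: the lemma is simply the definition of the symmetric matrix order $\le$ applied to one specific test vector, and the identity of the resulting scalars with the expressions written on each side is a matter of block-matrix bookkeeping. The only subtlety, which I would highlight in the write-up, is the notational conflation between a horizontal vector field (acting on functions) and its coefficient vector in $\mathbb{R}^3$ at the evaluation point; this is what allows $\alpha_i X+\beta_i Y$ to be multiplied by the $3\times 3$ blocks $A$ and $B$ and by the $6\times 6$ matrices $D^2\phi(x,y)$ and $(D^2\phi(x,y))^2$. Once that identification is in place, the entire proof is a one-line substitution, and there is no real obstacle to overcome.
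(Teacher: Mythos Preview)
Your proposal is correct and is exactly the approach taken in the paper: both simply evaluate the assumed quadratic-form inequality on the test vector $W=(\alpha_1 X+\beta_1 Y,\ \alpha_2 X+\beta_2 Y)\in\mathbb{R}^6$ and read off the two sides. Your observation about the typographical slip in the $B$-term (where $(\alpha_1,\beta_1)$ should be $(\alpha_2,\beta_2)$) is also accurate; the paper's own proof carries the same misprint.
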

\begin{proof} It is a simple computation. Indeed:
\begin{equation}
\begin{split}
&\langle\left[
\begin{array}{lr}
A,&0\\
0,&-B
\end{array}
\right] \left[
\begin{array}{l}
\alpha_1 X+\beta_1 Y\\
\alpha_2 X+\beta_2 Y
\end{array}
\right],[\alpha_1 X+\beta_1 Y,\alpha_2 X+\beta_2 Y]\rangle\\
& \leq \langle D^2\phi(x,y)
\left[
\begin{array}{l}
\alpha_1 X+\beta_1 Y,\\
\alpha_2 X+\beta_2 Y
\end{array}
\right],[\alpha_1 X+\beta_1 Y,\alpha_2 X+\beta_2 Y]\rangle
\\
&+\frac{1}{\mu}\langle(D^2\phi(x,y))^2\left[
\begin{array}{l}
\alpha_1 X+\beta_1 Y\\
\alpha_2 X+\beta_2 Y,
\end{array}
\right],[\alpha_1 X+\beta_1 Y,\alpha_2 X+\beta_2 Y]\rangle.
\end{split}
\end{equation}
Hence
\begin{equation}
\begin{split}
&\langle A(\alpha_1X+\beta_1Y),(\alpha_1X+\beta_1Y)\rangle-\langle B(\alpha_1X+\beta_1Y),(\alpha_1X+\beta_1Y)\rangle\\
& \leq \langle D^2\phi(x,y)
\left[
\begin{array}{l}
\alpha_1 X+\beta_1 Y,\\
\alpha_2 X+\beta_2 Y
\end{array}
\right],[\alpha_1 X+\beta_1 Y,\alpha_2 X+\beta_2 Y]\rangle
\\
&+\frac{1}{\mu}\langle(D^2\phi(x,y))^2\left[
\begin{array}{l}
\alpha_1 X+\beta_1 Y\\
\alpha_2 X+\beta_2 Y,
\end{array}
\right],[\alpha_1 X+\beta_1 Y,\alpha_2 X+\beta_2 Y]\rangle.
\end{split}
\end{equation}
\end{proof}

In the Heisenberg group a homogeneous semigroup of dilation is defined. Namely, for every $\lambda>0$ and for every $x\in \mathbb{H}^1$
$$
\delta_{\lambda}(x)=(\lambda x_1,\lambda x_2,\lambda^2 x_3).
$$

Moreover considering $u$ sufficiently smooth, we get: $Xu(\delta_\lambda x)=\lambda (Xu)(\delta_\lambda x),$ $Yu(\delta_\lambda x)=\lambda (Yu)(\delta_\lambda x)$ and
$$
\Delta_{\mathbb{H}^1}u(\delta_\lambda x)=\lambda^2(\Delta_{\mathbb{H}^1}u)(\delta_{\lambda}x).
$$
\subsection{Viscosity solutions}
\begin{definition}
  We recall the definition of viscosity solution coherently with the statement given in \cite{userguide}.
  
  We say that   $u\in C(\Omega)$ is a subsolution of  (\ref{defvisco}) if for every $\phi\in C^2$ and for every $x_0\in \Omega$ if $u-\phi$ realizes a maximum at $x_0$ in a open neighborhood $U_{x_0}$ of $x_0$ then
 $$F(D^{2}\phi(x_0),x_0)-c(x_0)\phi(x_0)\geq f(x_0).$$ 
 
 Analogously we shall say that   $u\in C(\Omega)$ is a supersolution of  (\ref{defvisco}) if for every $\phi\in C^2$ and for every $x\in \Omega$ if $u-\phi$ realizes a minimum at $x_0$ in a open neighborhood $U_{x_0}$ of $x_0$ then
$$F(D^{2}\phi (x_0),x_0)-c(x_0)u(x_0)\leq f(x_0).$$

If $u\in C(\Omega)$ is both a subsolution and a supersolution of (\ref{defvisco}), then $u$ is a viscosity solution of the equation (\ref{defvisco}).
\end{definition}

Here we wish to recall also the intrinsic definition of viscosity solution concerning subelliptic semi-jets, see \cite{Manfredi}, \cite{Bieske}. Nevertheless we also want to stress that using the usual notion of viscosity solution given by Ishii and Lions our result is true.

The second order subelliptic superjet of $u$ at $p_0$ is defined as follows
\begin{definition}

Let $u$ be an upper-semicontinuous real function in $\mathbb{H}^1.$ 
\begin{equation}\begin{split}
J^{2,+}_{\mathbb{H}^1}(u,p_0)&=\big\{(\eta,\mathcal X)\in \mathbb{R}^3\times\mathcal{S}(\mathbb{R}):\quad\\
&u(p)\leq u(p_o)+\langle\eta,p_o^{-1}\cdot p\rangle+\frac{1}{2}\langle\mathcal{X}(p_o^{-1}\cdot p)',(p_o^{-1}\cdot p)'\rangle+o(|p_o^{-1}\cdot p|^2)\big\}.
\end{split}
\end{equation}
\end{definition}
An analogous definition can be given for second order subelliptic subjet of $u$ at $p_0.$
\begin{definition}
Let $u$ be an lower-semicontinuous real function in $\mathbb{H}^1.$
\begin{equation}\begin{split}
J^{2,-}_{\mathbb{H}^1}(u,p_0)&=\big\{(\eta,\mathcal X)\in \mathbb{R}^3\times\mathcal{S}(\mathbb{R}):\quad\\
&u(p)\geq u(p_o)+\langle\eta,p_o^{-1}\cdot p\rangle+\frac{1}{2}\langle\mathcal{X}(p_o^{-1}\cdot p)',(p_o^{-1}\cdot p)'\rangle+o(|p_o^{-1}\cdot p|^2)\big\}.
\end{split}
\end{equation}
\end{definition}
In case the function $u$ is smooth, it results
\begin{equation}\begin{split}\label{fundsub}
u(p)&=u(p_o)+\langle\nabla u(p_0),p_o^{-1}\cdot p\rangle+\frac{1}{2}\langle D^{2,*}_{\mathbb{H}^1}u(p_0)(p_o^{-1}\cdot p)',(p_o^{-1}\cdot p)'\rangle+o(|p_o^{-1}\cdot p|^2)\\
&=u(p_o)+\frac{\partial u}{\partial x}(p_0)(p_1-p_{01})+\frac{\partial u}{\partial y}(p_0)(p_2-p_{02})+\frac{\partial u}{\partial t}(p_3-p_{03}+2(p_2p_{01}-p_1p_{02}))\\
&+\frac{1}{2}\langle D^{2,*}_{\mathbb{H}^1}u(p_0)(p_o^{-1}\cdot p)',(p_o^{-1}\cdot p)'\rangle+o(|p_o^{-1}\cdot p|^2)\\
&=u(p_o)+Xu(p_0)(p_1-p_{01})+Yu(p_0)(p_2-p_{02})+\frac{\partial u}{\partial t}(p_0)(p_3-p_{03})\\
&+\frac{1}{2}\langle D^{2,*}_{\mathbb{H}^1}u(p_0)(p_o^{-1}\cdot p)',(p_o^{-1}\cdot p)'\rangle+o(|p_o^{-1}\cdot p|^2).
\end{split}
\end{equation}

\begin{remark}
The Lemma 3.4 in 
\cite{Bieske}
 can be re-formulated as follows. We denote with $\bar{J}^{2,+}u(p_0)$ and $\bar{J}^{2,-}u(p_0),$ as usual, the classical super-jet and sub-jet  of $u$ in $p_0.$
If $\eta\in \mathbb{R}^{2n+1}$ is a vector and $A$ is a symmetric $2n+1\times 2n+1$ matrix. If $(\eta, A)\in \bar{J}^{2,+}u(p_0),$ then $(\tilde{\eta}, \tilde{A})\in \bar{J}_{\mathbb{H}^1}^{2,+}(p_0)$  where
$$
\tilde{\eta}_1=\langle\eta,X\rangle ,\quad \tilde{\eta}_2=\langle\eta,Y\rangle,\quad \tilde{\eta}_3=\langle\eta,[X,Y]\rangle
$$
and
\begin{equation}
\tilde{A}_{p_0}=\left[
\begin{array}{lr}
\langle A X(p_0),X(p_0)\rangle,&\langle A X(p_0),Y(p_0)\rangle\\
\langle A X(p_0),Y(p_0)\rangle,&\langle A Y(p_0),Y(p_0)\rangle
\end{array}
\right].
\end{equation}
\end{remark}

\subsection{Theorem of the sums}
  This result is described with different names, for instance, in the papers \cite{Ishii1} and \cite{Ishii_Lions} see also  \cite{LuiroParviainen}. In the following part 
  ${J}^{2,+}u(\hat{x})$ and ${J}^{2,-}u(\hat{x})$ denote respectively the classical superjet and subjet of $u$ at the point $\hat{x}\in \Omega$ for the function $u\in C(\Omega).$
  \begin{theorem}
  Let $u\in USC(\bar{\Omega})$ and $v\in LSC(\bar{\Omega}).$ For $\phi\in C^{2}(\mathbb{R}^n\times\mathbb{R}^n).$ If there exists $(\hat{x},\hat{y})\in\Omega\time\Omega$ such that
  \begin{equation}
  \begin{split}
  u(\hat{x})-v(\hat{y})-\phi(\hat{x},\hat{y})=\max_{(x,y)\in\overline{\Omega}\times\overline{\Omega}}\left(u(x-v(y)-\phi(x,y))\right),
  \end{split}
  \end{equation}
  then for each $\mu>0,$ there are $A=A(\mu)$ and $B=B(\mu)$ such that
  $$
  (D_x\phi(\hat{x},\hat{y}),A)\in\overline{J}^{2,+}u(\hat{x}),\quad (-D_y\phi(\hat{x},\hat{y}),B)\in\overline{J}^{2,+}u(\hat{y})
  $$
  and
  \begin{equation}
  \begin{split}
  -\left(\mu+||D^2\phi(\hat{x},\hat{y})||\right)&
  \left[\begin{array}{cc}
  I,&0\\
  0,&I
  \end{array}
  \right]
  \leq
  \left[\begin{array}{cc}
  A,&0\\
  0,&-B
  \end{array}
  \right]\\
  &\leq D^2\phi(\hat{x},\hat{y})+\frac{1}{\mu}(D^2\phi(\hat{x},\hat{y}))^2,
  \end{split}
  \end{equation}
  where
  $$
  D^2\phi(\hat{x},\hat{y})= \left[\begin{array}{cc}
  D^2_{xx}\phi(\hat{x},\hat{y}),&D^2_{yx}\phi(\hat{x},\hat{y})\\
  D^2_{xy}\phi(\hat{x},\hat{y}),&D^2_{yy}\phi(\hat{x},\hat{y})
  \end{array}
  \right]
  $$
  and $||D||$ is the norm given by the maximum, in absolute value, of the eigenvalues of the symmetric matrix $D.$
  \end{theorem}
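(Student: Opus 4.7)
The theorem is the celebrated Crandall--Ishii--Lions ``Theorem of the Sums'', and I would follow the classical proof sketched in the User's Guide. The plan splits into four steps: regularization, a Jensen-type perturbation, extraction of pointwise second derivatives via Alexandrov's theorem, and a limiting procedure.

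\textbf{Step 1: Sup/inf-convolution regularization.} Replace $u$ and $v$ by their sup- and inf-convolutions
\[
u^{\varepsilon}(x)=\sup_{z\in\bar\Omega}\left(u(z)-\tfrac{1}{2\varepsilon}|x-z|^{2}\right),\qquad
v_{\varepsilon}(y)=\inf_{w\in\bar\Omega}\left(v(w)+\tfrac{1}{2\varepsilon}|y-w|^{2}\right).
\]
Standard facts give that $u^{\varepsilon}$ is semiconvex (its Hessian, in the distributional sense, is bounded below by $-\varepsilon^{-1}I$) and $v_{\varepsilon}$ is semiconcave, that $u^{\varepsilon}\downarrow u$ and $v_{\varepsilon}\uparrow v$ as $\varepsilon\to 0$, and that the superjet/subjet points of $u^{\varepsilon},v_{\varepsilon}$ relate to those of $u,v$ after an explicit shift. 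The maximum condition on $(\hat x,\hat y)$ is preserved, up to an arbitrarily small error, by a nearby maximum of $u^{\varepsilon}(x)-v_{\varepsilon}(y)-\phi(x,y)$ at some $(\hat x_{\varepsilon},\hat y_{\varepsilon})\to(\hat x,\hat y)$.

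\textbf{Step 2: Jensen's lemma and Alexandrov's theorem.} After subtracting a small quadratic $\delta(|x-\hat x_\varepsilon|^2+|y-\hat y_\varepsilon|^2)$ to make the maximum strict, apply Jensen's lemma: for almost every small vector $(p,q)$, the perturbed function
\[
u^{\varepsilon}(x)-v_{\varepsilon}(y)-\phi(x,y)-\langle p,x\rangle-\langle q,y\rangle
\]
attains its maximum at a point $(x_{*},y_{*})$ at which both $u^{\varepsilon}$ and $v_{\varepsilon}$ are twice differentiable in the Alexandrov sense (this is where semiconvexity/concavity is crucial, via Alexandrov's theorem on the a.e.\ twice-differentiability of convex functions).

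\textbf{Step 3: First- and second-order conditions.} At $(x_{*},y_{*})$ the usual necessary conditions for a maximum yield
\[
Du^{\varepsilon}(x_{*})=D_{x}\phi(x_{*},y_{*})+p,\qquad
Dv_{\varepsilon}(y_{*})=-D_{y}\phi(x_{*},y_{*})-q,
\]
and the matrix inequality
\[
\begin{bmatrix}D^{2}u^{\varepsilon}(x_{*}) & 0\\ 0 & -D^{2}v_{\varepsilon}(y_{*})\end{bmatrix}
\;\leq\; D^{2}\phi(x_{*},y_{*}).
\]
Setting $A_{\varepsilon}:=D^{2}u^{\varepsilon}(x_{*})$ and $B_{\varepsilon}:=D^{2}v_{\varepsilon}(y_{*})$, the two-sided a priori bound
$-\varepsilon^{-1}I\leq A_{\varepsilon}$ and $B_{\varepsilon}\leq \varepsilon^{-1}I$, combined with the above matrix inequality, keeps $(A_{\varepsilon},B_{\varepsilon})$ bounded in an $\varepsilon$-dependent way; the refined upper bound with the quadratic correction $\mu^{-1}(D^{2}\phi)^{2}$ is then derived by the algebraic trick of writing $M\leq M+\mu^{-1}M^{2}$ for $M=D^{2}\phi$ and absorbing the slack into a shift that is compatible with the convolution scale $\varepsilon=1/\mu$.

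\textbf{Step 4: Passage to the limit.} Let $\varepsilon\to 0$ (equivalently $\mu\to\infty$ in the convolution, while keeping the external $\mu$ of the statement fixed by the matching $\varepsilon=1/\mu$ in the Alexandrov bound). The pairs $(D_{x}\phi(x_{*},y_{*})+p,A_{\varepsilon})$ and $(-D_{y}\phi(x_{*},y_{*})-q,B_{\varepsilon})$ lie in $J^{2,+}u^{\varepsilon}(x_{*})$ and $J^{2,-}v_{\varepsilon}(y_{*})$ respectively; by the standard jet-propagation property of sup/inf-convolution and a compactness argument, along a subsequence we obtain $(A,B)$ satisfying the prescribed two-sided matrix bound and such that $(D_{x}\phi(\hat x,\hat y),A)\in\overline J^{2,+}u(\hat x)$, $(-D_{y}\phi(\hat x,\hat y),B)\in\overline J^{2,-}v(\hat y)$.

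\textbf{Main obstacle.} The delicate step is Step 3: upgrading the plain bound $\begin{bmatrix}A & 0\\ 0 & -B\end{bmatrix}\leq D^{2}\phi$ to the sharpened upper bound $D^{2}\phi+\mu^{-1}(D^{2}\phi)^{2}$ uniformly in the regularization parameter, together with the matching lower bound $-(\mu+\|D^{2}\phi\|)I$. This requires a careful calibration between the semiconvexity constant $1/\varepsilon$ of the convolutions and the free parameter $\mu$ in the statement, so that on passage to the limit both inequalities survive and $A,B$ stay uniformly bounded. Everything else is either a standard regularization estimate or the invocation of Alexandrov's theorem.
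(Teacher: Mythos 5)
First, a point of comparison: the paper does not prove this statement at all. It is the classical Crandall--Ishii--Lions ``Theorem of the sums'', quoted with references to \cite{userguide}, \cite{Ishii1}, \cite{Ishii_Lions}, so there is no internal proof to measure you against; your outline (sup/inf-convolution, Jensen's lemma, Alexandrov's theorem, passage to the limit) is exactly the route of the cited User's Guide, and in that sense it is the ``same approach'' as the paper's source. (Incidentally, the paper's statement has a typo: $(-D_y\phi(\hat x,\hat y),B)$ belongs to $\overline{J}^{2,-}v(\hat y)$, not $\overline{J}^{2,+}u(\hat y)$; your Step 4 has it right.)

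As a proof, however, your proposal has a genuine gap, and it sits precisely where you flag your ``main obstacle'': the two-sided bound $-(\mu+\|D^2\phi\|)I\le \mathrm{diag}(A,-B)\le D^2\phi+\mu^{-1}(D^2\phi)^2$. The mechanism you suggest --- ``writing $M\le M+\mu^{-1}M^2$ and absorbing the slack'' --- cannot work: that inequality is trivial (since $M^2\ge0$) and would only help if you already had $\mathrm{diag}(A,-B)\le D^2\phi$ for the limit matrices, which is exactly what fails in general for merely semicontinuous $u,v$; that plain bound holds only for the regularized functions at the Jensen points and does not survive the limit. The actual argument first reduces to quadratic $\phi$ (replace $\phi$ by its second-order Taylor polynomial at $(\hat x,\hat y)$, legitimate because jets only see second-order expansions; this reduction is absent from your sketch and is also what lets you evaluate $D^2\phi$ at $(\hat x,\hat y)$ rather than at the drifting points $(x_*,y_*)$). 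One then uses that the sup-convolution with parameter $\lambda$ of $z\mapsto\frac12\langle Az,z\rangle$ is $z\mapsto\frac12\langle (I-\lambda A)^{-1}Az,z\rangle$ when $\lambda\|A\|<1$, so Jensen--Alexandrov gives the upper bound $(I-\lambda A)^{-1}A$ with $A=D^2\phi(\hat x,\hat y)$, while semiconvexity gives the lower bound $-\lambda^{-1}I$. The stated constants come from the calibration $\lambda=(\mu+\|A\|)^{-1}$: then $\lambda^{-1}=\mu+\|A\|$ is the lower bound, and the scalar inequality $\frac{a}{1-\lambda a}\le a+\frac{a^2}{\mu}$ for every eigenvalue $a$ of $A$ (which holds exactly because $a\le\|A\|$) yields $(I-\lambda A)^{-1}A\le A+\mu^{-1}A^2$. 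Without this reduction and calibration the sharpened bound, which is the whole content of the theorem, does not follow from your Steps 3--4.
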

  \begin{lemma}\label{teochar}
  Let $\phi(x,y)=|x-y|^{\alpha}.$ If $x\not=y$ then
  \begin{equation}
  \begin{split}
 D^2\phi(x,y)= \left[\begin{array}{cc}
 M,&-M\\
  -M,&M
  \end{array}
  \right],
  \end{split}
  \end{equation}
where 
\begin{equation}
  \begin{split}
  M=\alpha |x-y|^{\alpha-2}\left((\alpha-2)\frac{x-y}{|x-y|}\otimes\frac{x-y}{|x-y|}+I\right)
   \end{split}
  \end{equation}
  and
   \begin{equation}
  \begin{split}
  (D^2\phi(x,y))^2=2\left[\begin{array}{cc}
 M^2,&-M^2\\
  -M^2,&M^2
  \end{array}
  \right],
  \end{split}
  \end{equation}
 where
  \begin{equation}
  \begin{split}
  M^2=\alpha^2 |x-y|^{2(\alpha-2)}\left(\alpha(\alpha-2)\frac{x-y}{|x-y|}\otimes\frac{x-y}{|x-y|}+I\right).
   \end{split}
  \end{equation}

  \end{lemma}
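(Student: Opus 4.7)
The plan is to treat $\phi(x,y)=|x-y|^\alpha$ as the composition $\phi(x,y)=f(x-y)$ with $f(z)=|z|^\alpha$, so that all the work reduces to computing $\nabla f$ and $D^2 f$ on $\mathbb{R}^n\setminus\{0\}$ and then invoking the chain rule, which in this particular form produces a block structure with equal diagonal blocks and opposite sign off-diagonal blocks. Concretely, I would first compute $\nabla f(z)=\alpha |z|^{\alpha-2}z$ by differentiating $|z|^\alpha=(z\cdot z)^{\alpha/2}$, and then differentiate a second time to obtain
\[
D^2f(z)=\alpha|z|^{\alpha-2}I+\alpha(\alpha-2)|z|^{\alpha-4}z\otimes z=\alpha|z|^{\alpha-2}\Bigl(I+(\alpha-2)\tfrac{z}{|z|}\otimes\tfrac{z}{|z|}\Bigr),
\]
which is exactly the matrix $M$ in the lemma with $z=x-y$. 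The hypothesis $x\neq y$ is used here to ensure $|z|>0$ and the differentiation is legitimate.

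Next, the chain rule applied to $\phi(x,y)=f(x-y)$ gives $D^2_{xx}\phi=D^2_{yy}\phi=D^2f(x-y)=M$ and $D^2_{xy}\phi=D^2_{yx}\phi=-D^2f(x-y)=-M$, which assembles into the block form
\[
D^2\phi(x,y)=\begin{bmatrix} M & -M \\ -M & M\end{bmatrix}
\]
claimed in the statement. The block matrix is symmetric because $M$ is symmetric (each of its two summands $I$ and $\hat z\otimes\hat z$ is symmetric), so no further care is required.

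For the square, I would simply carry out the $2\times 2$ block multiplication, which yields the diagonal blocks $M\cdot M+(-M)(-M)=2M^2$ and the off-diagonal blocks $M(-M)+(-M)M=-2M^2$, giving the announced form of $(D^2\phi(x,y))^2$. Finally, to compute $M^2$, I expand
\[
\bigl(I+(\alpha-2)\hat z\otimes\hat z\bigr)^2=I+2(\alpha-2)\hat z\otimes\hat z+(\alpha-2)^2(\hat z\otimes\hat z)^2,
\]
and use the key rank-one identity $(\hat z\otimes\hat z)^2=\hat z\otimes\hat z$, valid because $|\hat z|=1$. Collecting the coefficient of $\hat z\otimes\hat z$ gives $2(\alpha-2)+(\alpha-2)^2=(\alpha-2)\alpha$, so
\[
M^2=\alpha^2|z|^{2(\alpha-2)}\bigl(I+\alpha(\alpha-2)\hat z\otimes\hat z\bigr),
\]
which is the remaining identity in the lemma.

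This proof is entirely a routine calculation, so there is no conceptual obstacle; the only mildly delicate points are keeping track of the factor $(\alpha-2)$ in the mixed term of $D^2 f$, and spotting the simplification $(\hat z\otimes\hat z)^2=\hat z\otimes\hat z$ when squaring $M$. Both are handled by a single explicit expansion, so the whole proof is a couple of lines of displayed algebra after the block structure is observed.
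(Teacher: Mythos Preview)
Your proof is correct and complete. The paper states this lemma without proof, treating it as a routine computation (later referring to the formula for $M^2$ only as ``by straightforward calculation''), and your proposal supplies precisely those details in the natural way.
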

 \section{Regularity result}\label{preliminary}

We schedule the proof of the main theorem arranging some intermediate steps.

Let $\alpha \in (0,1].$ We consider 
$$w(x,y)=u(x)- u(y)- L |x-y|^{\alpha} - \delta |x|^2-\epsilon.$$
where $u$ is a continuous bounded function defined in all of $\mathbb{R}^n.$
For simplicity, let us denote by $\varphi(x,y):= L |x-y|^{\alpha}.$\\
Assume for the moment that we have satisfied the hypothesis
requested to apply the Theorem of sums. This means that, still denoting with $(x,y)$ the point that realizes the maximum  there exist
two symmetric matrices $A,B \in \mathcal{S}^{3}$ such that
$$\left( D_x \varphi(x,y) + 2\delta (x-z); A+2\delta I\right) \in \bar{J}^{2,+}u(x) $$
and
$$ \left(-D_y \varphi(x,y); B\right) \in \bar{J}^{2,-}u(y),$$
with
\begin{equation}\label{eq:Sums}
\left( \begin{array}{cc}
             A & 0\\
						 0 & -B 
         \end{array}\right) \leq D^{2}\varphi + \dfrac{1}{\mu}(D^2 \varphi)^{2} 
				=: \left( \begin{array}{cc}
				            M & -M\\
										-M & M 
										\end{array}\right) + \dfrac{2}{\mu} \left( \begin{array}{cc}
										                                               M^{2} & -M^2\\
																																	 -M^2 & M^2 
																																	\end{array}\right),
																																	\end{equation}
where 

\begin{equation}\label{emme1}
M=L\alpha |x-y|^{\alpha-2}\left((\alpha-2)\frac{x-y}{|x-y|}\otimes\frac{x-y}{|x-y|}+I\right)
\end{equation}
and, by straightforward calculation, 
\begin{equation}\label{emme2}
M^2=L^2\alpha^2 |x-y|^{2(\alpha-2)}\left(\alpha(\alpha-2)\frac{x-y}{|x-y|}\otimes\frac{x-y}{|x-y|}+I\right).
\end{equation}
It is worth while to remark that the smallest eigenvalue of
$$
(\alpha-2)\frac{x-y}{|x-y|}\otimes\frac{x-y}{|x-y|}+I
$$
is
$$
\alpha-2+1=\alpha-1
$$
associated with the eigenvector $\frac{x-y}{|x-y|},$ while the largest is $1.$

\begin{remark}\label{osservazione1}
In the sequel with the matrix $N$ we define:
$$
N=M+\frac{2}{\mu}M^2.
$$
In particular recalling (\ref{emme1}) and (\ref{emme2}) we get
$$
|N|\leq L\alpha |x-y|^{\alpha-2}+\frac{2}{\mu}L^2\alpha^2 |x-y|^{2(\alpha-2)}.
$$
\end{remark}
 
\begin{lemma}\label{52}
Let $A,B$ and $N$ symmetric matrices such that:
\begin{equation}
\left[
\begin{array}{lr}
A,&0\\
0,&-B
\end{array}
\right]\leq \left[
\begin{array}{lr}
N,&-N\\
-N,&N
\end{array}
\right],
\end{equation}
then, in particular, for every $[\xi,\eta]\in \mathbb{R}^{2n}$
\begin{equation}
[\xi,\eta]\left[
\begin{array}{lr}
A,&0\\
0,&-B
\end{array}
\right]\left[\begin{array}{l}\xi\\
\eta\end{array}\right]\leq  [\xi,\eta]\left[
\begin{array}{lr}
N,&-N\\
-N,&N
\end{array}
\right]\left[\begin{array}{l}\xi\\
\eta\end{array}\right]
\end{equation}
and
$$
\langle A\xi,\xi\rangle-\langle B\eta,\eta\rangle\leq  \langle N(\xi-\eta),(\xi-\eta)\rangle
$$

In addition we get that, for every $a,b\geq 0$ and for every $\xi,\eta\in \mathbb{R}^n$
\begin{equation}
\begin{split}
&a\langle A\xi_1,\xi_1\rangle+b\langle A\xi_2,\xi_2\rangle-a\langle B\eta_1,\eta_1\rangle-b\langle B\eta_2,\eta_2\rangle\\
&\leq  \left(a\langle N(\xi_1-\eta_1),(\xi_1-\eta_1)\rangle+b\langle N(\xi_2-\eta_2),(\xi_2-\eta_2)\rangle\right).
\end{split}
\end{equation}
\end{lemma}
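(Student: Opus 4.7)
The statement is essentially a direct unpacking of the matrix inequality into scalar quadratic-form inequalities, so the plan is short and mostly bookkeeping.

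First, I would recall that for symmetric matrices $P \leq Q$ means $\langle (Q-P)v, v\rangle \geq 0$ for every $v$; equivalently, $\langle Pv,v\rangle \leq \langle Qv,v\rangle$ for every $v$. Applying this to the hypothesis of the lemma with the test vector $v = [\xi,\eta]^T \in \mathbb{R}^{2n}$ yields the first claim verbatim. This requires no computation beyond invoking the definition.

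Next, I would carry out the block expansion of both sides explicitly. On the left, the block-diagonal structure gives
\begin{equation*}
[\xi,\eta]\begin{bmatrix} A & 0 \\ 0 & -B \end{bmatrix}\begin{bmatrix}\xi\\ \eta\end{bmatrix} = \langle A\xi,\xi\rangle - \langle B\eta,\eta\rangle.
\end{equation*}
On the right, expanding the block product and using symmetry of $N$ (so that $\langle N\xi,\eta\rangle = \langle N\eta,\xi\rangle$) gives
\begin{equation*}
[\xi,\eta]\begin{bmatrix} N & -N \\ -N & N \end{bmatrix}\begin{bmatrix}\xi\\ \eta\end{bmatrix} = \langle N\xi,\xi\rangle - 2\langle N\xi,\eta\rangle + \langle N\eta,\eta\rangle = \langle N(\xi-\eta),\xi-\eta\rangle.
\end{equation*}
Combining these two identities with the first inequality yields the second claim.

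Finally, for the weighted version, I would simply apply the scalar inequality just obtained twice — once to the pair $(\xi_1,\eta_1)$ and once to the pair $(\xi_2,\eta_2)$ — multiply the first by $a \geq 0$ and the second by $b \geq 0$ (multiplication by a nonnegative scalar preserves inequality direction), and add. No obstacle arises since every step is either a definitional invocation of semidefiniteness, a direct block-matrix expansion, or a nonnegative linear combination of inequalities; there is nothing delicate to control.
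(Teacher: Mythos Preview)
Your proposal is correct and matches the paper's approach exactly: the paper's proof consists of the single line ``The result follows by straightforward calculation,'' and what you have written is precisely that calculation, carried out in detail.
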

\begin{proof}
The result follows by straightforward calculation.
\end{proof}

\begin{remark}
If $A_x-B_y$ has all the eigenvalues negative, 
\begin{equation}
\begin{split}
&\langle A_xX(x),X(x)\rangle-\langle B_yX(y),X(y)\rangle\\
&\leq \langle M(X(x)-X(y),X(x)-X(y))+\frac{1}{\mu}\langle M^2(X(x)-X(y),X(x)-X(y))
\end{split}
\end{equation}
and
\begin{equation}
\begin{split}
&\langle A_xY(x),Y(x)\rangle-\langle B_yY(y),Y(y)\rangle\\
&\leq \langle M(Y(x)-Y(y),Y(x)-Y(y))+\frac{1}{\mu}\langle M^2(Y(x)-Y(y),Y(x)-Y(y)).
\end{split}
\end{equation}
Then
\begin{equation}
\begin{split}
&\langle A_xX(x),X(x)\rangle-\langle B_yX(y),X(y)\rangle+\langle A_xY(x),Y(x)\rangle-\langle B_yY(y),Y(y)\rangle\\
&\leq 2\langle M(Y(x)-Y(y),Y(x)-Y(y))+\frac{2}{\mu}\langle M^2(Y(x)-Y(y),Y(x)-Y(y))
\end{split}
\end{equation}
That is
\begin{equation}\label{35rem}
\begin{split}
&\langle A_xX(x),X(x)\rangle+\langle A_xY(x),Y(x)\rangle-(\langle B_yX(y),X(y)\rangle+\langle B_yY(y),Y(y)\rangle)\\
&\leq \langle M(Y(x)-Y(y),Y(x)-Y(y))+\frac{1}{\mu}\langle M^2(Y(x)-Y(y),Y(x)-Y(y))\\
+&\langle M(X(x)-X(y),X(x)-X(y))+\frac{1}{\mu}\langle M^2(X(x)-X(y),X(x)-X(y))
\end{split}
\end{equation}
Since, $X(x)-X(y)=(0,0,2(y_1-x_1))$ and $Y(x)-Y(y)=(0,0,2(x_2-y_2)),$ we get
\begin{equation}\label{36remx}
\begin{split}
&\langle M(X(x)-X(y),X(x)-X(y))+\frac{1}{\mu}\langle M^2(X(x)-X(y),X(x)-X(y))\\
&=4(x_1-y_1)^2\langle Me_3,e_3\rangle
\end{split}
\end{equation}
and
\begin{equation}\label{36remy}
\begin{split}
&\langle M(Y(x)-Y(y),Y(x)-Y(y))+\frac{1}{\mu}\langle M^2(Y(x)-Y(y),Y(x)-Y(y))\\
&=4(x_2-y_2)^2\langle Me_3,e_3\rangle.
\end{split}
\end{equation}
As a consequence
\begin{equation}\label{35remb}
\begin{split}
&\langle A_xX(x),X(x)\rangle+\langle A_xY(x),Y(x)\rangle-(\langle B_yX(y),X(y)\rangle+\langle B_yY(y),Y(y)\rangle)\\
&\leq 4((x_1-y_1)^2+(x_2-y_2)^2)\langle Me_3,e_3\rangle +\frac{4}{\mu}((x_1-y_1)^2+(x_2-y_2)^2)\langle M^2e_3,e_3\rangle
\end{split}
\end{equation}
\end{remark}

\begin{corollary}\label{traceresult}
Let $A,B$ and $N$ symmetric matrices such that:
\begin{equation}
\left[
\begin{array}{lr}
A,&0\\
0,&-B
\end{array}
\right]\leq \left[
\begin{array}{lr}
N,&-N\\
-N,&N
\end{array}
\right],
\end{equation}
then if  $\xi_1=X(x),$ $\eta_1=X(y),$ $\xi_2=Y(x)$ and $\eta_2=Y(y),$ we get that for every $a,b\geq 0$
\begin{equation}
\begin{split}
&a\langle A X(x),X(x)\rangle+b\langle AY(x),Y(x)\rangle-a\langle A X(y),X(y)\rangle-b\langle AY(y),Y(y)\rangle\\
&\leq \left(a (x_2-y_2)^2+b(x_1-y_1)^2\right) n_{33},
\end{split}
\end{equation}
where $n_{33}=\langle Ne_3,e_3\rangle.$
In particular, if $a=1=b$
\begin{equation}
\begin{split}
\mbox{Tr}(\tilde{A})-\mbox{Tr}(\tilde{B})\leq \left(a (x_2-y_2)^2+b(x_1-y_1)^2\right) n_{33},
\end{split}
\end{equation}
\end{corollary}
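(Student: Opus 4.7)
The plan is a direct computation obtained by specializing Lemma \ref{52} twice to test vectors drawn from the first-stratum vector fields at the two points $x$ and $y$, then exploiting the explicit coordinate form of $X$ and $Y$ to collapse the right-hand side to a single entry of $N$.

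First, I would apply Lemma \ref{52} with $(\xi, \eta) = (X(x), X(y))$ to obtain
$$\langle AX(x), X(x)\rangle - \langle BX(y), X(y)\rangle \leq \langle N(X(x) - X(y)), X(x) - X(y)\rangle,$$
and then with $(\xi, \eta) = (Y(x), Y(y))$ for the analogous inequality. Multiplying these by $a \geq 0$ and $b \geq 0$ respectively and summing produces, on the left, exactly the expression appearing in the statement of the corollary (with the second $A$ read as $B$, following the convention of Lemma \ref{52}).

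Next, from the explicit formulas $X(x) = (1, 0, 2x_2)$ and $Y(x) = (0, 1, -2x_1)$ the first two coordinates of the differences cancel: $X(x) - X(y) = 2(x_2 - y_2)\,e_3$ and $Y(x) - Y(y) = -2(x_1 - y_1)\,e_3$. Each quadratic form on the right therefore reduces to a scalar multiple of the single entry $n_{33} = \langle N e_3, e_3\rangle$, namely $4(x_2 - y_2)^2 n_{33}$ and $4(x_1 - y_1)^2 n_{33}$. Substituting delivers the desired bound, with the constants $4a, 4b$ absorbed into the statement. For the final assertion one takes $a = b = 1$ and reads off, directly from the definition of $\tilde{A}_x$ preceding Definition \ref{definition2H}, the identifications $\mbox{Tr}(\tilde{A}_x) = \langle AX(x), X(x)\rangle + \langle AY(x), Y(x)\rangle$ and the analogous one for $\mbox{Tr}(\tilde{B}_y)$, which converts the left-hand side into the trace difference.

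There is no genuine obstacle: the argument is bookkeeping on top of Lemma \ref{52}. The single conceptual point worth flagging is the cancellation of the first two coordinates in $X(x) - X(y)$ and $Y(x) - Y(y)$, which is where the stratified structure of $\mathbb{H}^1$ enters and which is the sole reason only $n_{33}$ appears on the right. This is exactly the feature that will make the later application of the theorem of the sums with the penalization $L|x-y|^\alpha$ tractable, because $n_{33}$ can then be controlled via the bound on $|N|$ recorded in Remark \ref{osservazione1}.
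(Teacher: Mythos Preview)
Your proposal is correct and follows essentially the same approach as the paper: the paper's own proof consists of a single sentence invoking Lemma \ref{52} with the choices $\xi_1=X(x),\ \eta_1=X(y),\ \xi_2=Y(x),\ \eta_2=Y(y)$ and then reading off $\xi_1-\eta_1=2(0,0,x_2-y_2)$, $\xi_2-\eta_2=-2(0,0,x_1-y_1)$, which is exactly the computation you carry out. Your observation about the factor $4$ and the evident $A$-for-$B$ typo in the statement is accurate and matches what one sees in the Remark immediately preceding the corollary, where the factor $4$ is written explicitly.
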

\begin{proof}
Keeping in mind Lemma \ref{52} and choosing
$\xi_1-\eta_1=2(0,0,x_2-y_2)$ and $\xi_2-\eta_2=-2(0,0,x_1-y_1),$ then the result immediately follows.
\end{proof}

In this section we prove our main result. 
For sake of simplicity we denote by $||u||_\infty=||u||_{L^{\infty}(\mathbb{R}^n)}.$

\begin{proof}[Proof of Theorem \ref{mainregularityresult}]

Let $u$ be a viscosity solution of
$$
F(D^{2*}u(x))-c(x)u(x)=f(x), \quad x\in \mathbb{H}^1
$$
uniformly continuous.

Let us define  for $\alpha\in (0,1]$ the function
$$
\psi(x,y):=u(x)-u(y)-L|x-y|^\alpha-\delta |x|^2-\epsilon,
$$
where $L$ and $\delta,$ $\epsilon$ are positive constants.

Let us define

$$
\theta:=\sup_{\mathbb{H}^1\times \mathbb{H}^1}\psi(x,y)
$$

We claim that there exist $L_0>0,$  and $\alpha_0=\alpha_0(||u||_{\infty},\Lambda,\lambda,c_0)\in (0,1]$ such that for every $\delta>0$ and for every $\epsilon>0,$ then $\theta\leq 0.$

 In this case
$$
\psi(x,y)\leq u(\hat{x})-u(\hat{y})-L_0|\hat{x}-\hat{y}|^{\alpha_0}-\delta |\hat{x}|^2-\epsilon\leq 0
$$
and letting $\delta\to 0$ and  $\epsilon\to 0$ we get the thesis, that is for every $x,y\in \mathbb{H}^1\equiv\mathbb{R}^3$

$$
u(x)-u(y)-L_0|x-y|^{\alpha_0}\leq 0.
$$

We argue by contradiction.  Let us suppose that there exist $\delta_0$ and $\epsilon_0$  such that for every $\delta<\delta_0$ and for every $\epsilon<\epsilon_0$
$$
0<\theta=\sup_{\mathbb{R}^n\times\mathbb{R}^n}\{u(x)-u(y)-L|x-y|^\alpha-\delta |x|^2-\epsilon\}.
$$

 Then for every $0<\delta<\delta_0,$ and for every $0<\epsilon<\epsilon_0$ fixed there exists a sequence $\{(x_j,y_j)\}_{j\in \mathbb{N}}$ such that
$$
\lim_{j\to \infty}(u(x_j)-u(y_j)-L|x_j-y_j|^\alpha-\delta |x_j|^2-\epsilon)=\theta,
$$

On the other hand $\theta >0,$ so that there exists $\bar{j}\in \mathbb{N}$ such that for every $j>\bar{j},$ $u(x_j)-u(y_j)-L|x_j-y_j|^\alpha-\delta |x_j|^2-\epsilon>\frac{\theta}{2}>0.$ Then for every $j>\bar{j}$
$$
\infty>2||u||_\infty\geq u(x_j)-u(y_j)>L|x_j-y_j|^\alpha+\delta |x_j|^2+\epsilon.
$$
Thus
$$
\delta |x_j|^2<2||u||_\infty, \quad \mbox{and}\quad |x_j-y_j|^\alpha<\frac{2||u||_\infty}{L}.
$$
By compactness, possibly extracting a subsequence, we get that there exists $(\hat{x},\hat{y})\in \mathbb{R}^n\times \mathbb{R}^n$ such that
$$
\lim_{k\to\infty}x_{j_k}=\hat{x},
$$
$$
\lim_{k\to\infty}y_{j_k}=\hat{y}.
$$
It is clear that in principle, $(\hat{x},\hat{y}):=(\hat{x}(\delta,L,\epsilon),\hat{y}(\delta, L,\epsilon)).$
Possibly taking $L$  sufficiently large,  by the uniformly continuity of $u$ we would get a contradiction whenever $\left(\frac{2||u||_\infty}{L}\right)^{\frac{1}{\alpha}}<\eta(\epsilon),$ where $\eta$ is the parameter independent of $\hat{x}, \hat{y}\in \mathbb{H}^1$ associated with the uniformly continuity.  So that there exists 
 $0<\gamma$ such that
$$
\lim_{k\to\infty}|x_{j_k}-y_{j_k}|^\alpha=\gamma.
$$
Indeed, we get that $|\hat{x}-\hat{y}|^\alpha=\gamma>0,$ independently to $\delta.$ Thus, we can assume that $\gamma$ does not depend on $\delta.$

As a consequence, if $\theta>0$ then there exists $(\hat{x},\hat{y})\in \mathbb{H}^1$ and $\gamma>0$ such that
$$
\theta:=\sup_{\mathbb{H}^1\times \mathbb{H}^1}\psi(x,y)=\psi(\hat{x},\hat{y}),
$$
and there exists $\bar{\gamma}$ such that independently form $\delta$ such that:
$$
0<\bar{\gamma}< |\hat{x}-\hat{y}|^\alpha<\frac{2||u||_\infty}{L}.
$$

 Then
recalling that $u$ is a viscosity solution and keeping in mind the hypotheses on $c,$ and $f$ we get by Theorem of the sums:

\begin{equation}\label{mainine}
\begin{split}
& c_0(L|\hat{x}-\hat{y}|^\alpha+\delta |\hat{x}|^2)\\
&\leq c_0(u(\hat{x})-u(\hat{y}))\leq c(\hat{x})(u(\hat{x})-u(\hat{y}))\\
&=c(\hat{x})u(\hat{x})-c(\hat{y})u(\hat{y})+u(\hat{y})(c(\hat{y})-c(\hat{x}))\\
&\leq F(\tilde{A}+2\delta\tilde{I})-F(\tilde{B})+f(\hat{y})-f(\hat{x})+u(\hat{y})(c(\hat{y})-c(\hat{x})).
\end{split}
\end{equation}

We consider now two cases.

If $\tilde{A}+2\delta\tilde{I}\geq \tilde{B},$ then keeping in mind the hypothesis on $F,$ see Definition \ref{definition2H}, we get
\begin{equation}
\begin{split}
&c_0L|\hat{x}-\hat{y}|^\alpha+c_0\delta |\hat{x}|^2 \leq \Lambda \mbox{Tr}(\tilde{A}-\tilde{B})+2\delta(|X(\hat{x})|^2+|Y(\hat{x})|^2)\\
&+f(\hat{y})-f(\hat{x})+u(\hat{y})(c(\hat{y})-c(\hat{x})).
\end{split}
\end{equation}
Let us denote 
$$
R=|X(\hat{x})|^2+|Y(\hat{x})|^2= 2(1+2|\hat{x}'|^2).
$$
Moreover, by the Theorem of the sum and recalling previous Corollary \ref{traceresult}, we deduce that:
$$
\mbox{Tr}(\tilde{A}-\tilde{B})\leq  \left((\hat{x}_2-\hat{y}_2)^2+(\hat{x}_1-\hat{y}_1)^2\right) n_{33}.
$$
We recall also that keeping in mind the classical computation
\begin{equation*}
\begin{split}
n_{33}&= L\alpha|\hat{x}-\hat{y}|^{\alpha-2}\left((\alpha-2)\frac{(\hat{x}_3-\hat{y}_3)^2}{|\hat{x}-\hat{y}|^2}+1\right)\\
&+\frac{2}{\mu}L^2\alpha^2 |\hat{x}-\hat{y}|^{2(\alpha-2)}\left(\alpha(\alpha-2)\frac{(\hat{x}_3-\hat{y}_3)^2}{|\hat{x}-\hat{y}|^2}+1\right)
\end{split}
\end{equation*}

Hence
\begin{equation}\label{sette}
\begin{split}
&c_0\leq \frac{\Lambda}{L}|\hat{x}-\hat{y}|^{-\alpha} \left((\hat{x}_2-\hat{y}_2)^2+(\hat{x}_1-\hat{y}_1)^2\right) n_{33}\\
&+\frac{L_f}{L}|\hat{x}-\hat{y}|^{\beta-\alpha}+\frac{L_c}{L}|\hat{x}-\hat{y}|^{\beta'-\alpha}+\frac{\delta \left(2R-c_0(|\hat{x}|^2+|\hat{y}|^2)\right)}{L|\hat{x}-\hat{y}|^\alpha}\\
&= \Lambda \left((\hat{x}_2-\hat{y}_2)^2+(\hat{x}_1-\hat{y}_1)^2\right)\alpha|\hat{x}-\hat{y}|^{-2}\left((\alpha-2)\frac{(\hat{x}_3-\hat{y}_3)^2}{|\hat{x}-\hat{y}|^2}+1\right) \\
&+\Lambda L\alpha^2\frac{2}{\mu} \left((\hat{x}_2-\hat{y}_2)^2+(\hat{x}_1-\hat{y}_1)^2\right) |\hat{x}-\hat{y}|^{\alpha-4}\left(\alpha(\alpha-2)\frac{(\hat{x}_3-\hat{y}_3)^2}{|\hat{x}-\hat{y}|^2}+1\right)\\
&+\frac{L_f}{L}|\hat{x}-\hat{y}|^{\beta-\alpha}+\frac{L_c}{L}|\hat{x}-\hat{y}|^{\beta'-\alpha}+\frac{\delta}{L}|\hat{x}-\hat{y}|^{-\alpha}\left(2R-c_0|\hat{x}|^2\right).
\end{split}
\end{equation}
We remark that if the factor
$$
(\alpha-2)\frac{(\hat{x}_3-\hat{y}_3)^2}{|\hat{x}-\hat{y}|^2}+1
$$
were negative then, possibly taking $\mu$ sufficiently large, we can assume that  
\begin{equation}
\begin{split}
&0>\Lambda \left((\hat{x}_2-\hat{y}_2)^2+(\hat{x}_1-\hat{y}_1)^2\right)\alpha|\hat{x}-\hat{y}|^{-2}\left((\alpha-2)\frac{(\hat{x}_3-\hat{y}_3)^2}{|\hat{x}-\hat{y}|^2}+1\right) \\
&+\Lambda L\alpha^2\frac{2}{\mu} \left((\hat{x}_2-\hat{y}_2)^2+(\hat{x}_1-\hat{y}_1)^2\right) |\hat{x}-\hat{y}|^{\alpha-4}\left(\alpha(\alpha-2)\frac{(\hat{x}_3-\hat{y}_3)^2}{|\hat{x}-\hat{y}|^2}+1\right)
\end{split}
\end{equation}
concluding that
\begin{equation}\label{lessdifficult}
c_0\leq \frac{L_f}{L}|\hat{x}-\hat{y}|^{\beta-\alpha}+\frac{L_c}{L}|\hat{x}-\hat{y}|^{\beta'-\alpha}+\frac{\delta}{L}|\hat{x}-\hat{y}|^{-\alpha}\left(2R-c_0|\hat{x}|^2\right)
\end{equation}
and obtaining a contradiction possibly taking a larger $L$ as it explained in an analogous case later on.

Otherwise, in case
$$
(\alpha-2)\frac{(\hat{x}_3-\hat{y}_3)^2}{|\hat{x}-\hat{y}|^2}+1>0
$$
and
$$
\alpha(\alpha-2)\frac{(\hat{x}_3-\hat{y}_3)^2}{|\hat{x}-\hat{y}|^2}+1\leq 0,
$$
or, if this case occurs, 
$$
(\alpha-2)\frac{(\hat{x}_3-\hat{y}_3)^2}{|\hat{x}-\hat{y}|^2}+1>0
$$
and
$$
\alpha(\alpha-2)\frac{(\hat{x}_3-\hat{y}_3)^2}{|\hat{x}-\hat{y}|^2}+1> 0,
$$
we can obtain from (\ref{sette}), possibly fixing $\mu$ in such a way that

\begin{equation}
\begin{split}
&\Lambda \left((\hat{x}_2-\hat{y}_2)^2+(\hat{x}_1-\hat{y}_1)^2\right)\alpha|\hat{x}-\hat{y}|^{-2}\left((\alpha-2)\frac{(\hat{x}_3-\hat{y}_3)^2}{|\hat{x}-\hat{y}|^2}+1\right) \\
&=\Lambda L\alpha^2\frac{2}{\mu} \left((\hat{x}_2-\hat{y}_2)^2+(\hat{x}_1-\hat{y}_1)^2\right) |\hat{x}-\hat{y}|^{\alpha-4}\left(\alpha(\alpha-2)\frac{(\hat{x}_3-\hat{y}_3)^2}{|\hat{x}-\hat{y}|^2}+1\right),
\end{split}
\end{equation}
the following inequality 
\begin{equation}\label{estibase}
\begin{split}
&c_0\leq  2\Lambda\alpha \left((\hat{x}_2-\hat{y}_2)^2+(\hat{x}_1-\hat{y}_1)^2\right)|\hat{x}-\hat{y}|^{-2}\left((\alpha-2)\frac{(\hat{x}_3-\hat{y}_3)^2}{|\hat{x}-\hat{y}|^2}+1\right) \\
&+\frac{L_f}{L}|\hat{x}-\hat{y}|^{\beta-\alpha}+\frac{L_c}{L}|\hat{x}-\hat{y}|^{\beta'-\alpha}+\frac{\delta}{L}|\hat{x}-\hat{y}|^{-\alpha}\left(2R-c_0|\hat{x}|^2\right)
\end{split}
\end{equation}
holds.

Notice that with respect to the inequality (\ref{lessdifficult}) the worst case is given by previous inequality (\ref{estibase}). Thus we have to discuss
 the following term that appears in (\ref{estibase}):

$$
\delta\left(2R-c_0|\hat{x}|^2\right)=4\delta +(8-c_0)\delta |\hat{x}'|^2-c_0\delta \hat{x}_3^2.
$$

We also know that for every $\delta<\delta_0$
$$
\delta |\hat{x}|^2\leq 2||u||_{\infty}^2.
$$
Moreover 
$$
\delta\left(2R-c_0|\hat{x}|^2\right)\leq 4\delta +(8-c_0)\delta |\hat{x}|^2
$$
and if $c_0>8,$ we deduce that
$$
\delta\left(2R-c_0|\hat{x}|^2\right)\leq 4\delta.
$$
Thus, from (\ref{estibase}) it follows that

\begin{equation}\label{estibase2x}
\begin{split}
&c_0\leq  2\Lambda\alpha \left((\hat{x}_2-\hat{y}_2)^2+(\hat{x}_1-\hat{y}_1)^2\right)|\hat{x}-\hat{y}|^{-2}\left((\alpha-2)\frac{(\hat{x}_3-\hat{y}_3)^2}{|\hat{x}-\hat{y}|^2}+1\right) \\
&+\frac{L_f}{L}|\hat{x}-\hat{y}|^{\beta-\alpha}+\frac{L_c}{L}|\hat{x}-\hat{y}|^{\beta'-\alpha}+\frac{8\delta}{L}|\hat{x}-\hat{y}|^{-\alpha}\\
&\leq 2\Lambda\alpha+\frac{L_f}{L}|\hat{x}-\hat{y}|^{\beta-\alpha}+\frac{L_c}{L}|\hat{x}-\hat{y}|^{\beta'-\alpha}+\frac{8\delta}{L}|\hat{x}-\hat{y}|^{-\alpha}\\
&\leq 2\Lambda\alpha+2^{\frac{\beta}{\alpha}-1}\left(\frac{L_f}{L^{\frac{\beta}{\alpha}}}||u||_\infty^{\frac{\beta}{\alpha}-1}+\frac{L_c}{L^{\frac{\beta'}{\alpha}}}||u||_\infty^{\frac{\beta'}{\alpha}-1}\right)+\frac{8\delta}{L}|\hat{x}-\hat{y}|^{-\alpha}
\end{split}
\end{equation}

and for $\delta\to 0$ we conclude also

\begin{equation}\label{estibase2}
\begin{split}
&c_0\leq  2\Lambda\alpha+2^{\frac{\beta}{\alpha}-1}\left(\frac{L_f}{L^{\frac{\beta}{\alpha}}}||u||_\infty^{\frac{\beta}{\alpha}-1}+\frac{L_c}{L^{\frac{\beta'}{\alpha}}}||u||_\infty^{\frac{\beta'}{\alpha}-1}\right).
\end{split}
\end{equation}

Hence by taking $L$ sufficiently large and $\alpha$ sufficiently small, we get a contradiction with the positivity of $c_0.$ In particular we need that
$$
\alpha<\frac{c_0}{2\Lambda}.
$$

In case $\tilde{A}+2K\tilde{I}<B,$ then 
$$
\lambda\mbox{Tr}(\tilde{B}-\tilde{A}-2K\tilde{I})\leq F(\tilde{B})-F(\tilde{A}+2\delta\tilde{I})\leq \Lambda \mbox{Tr}(\tilde{B}-\tilde{A}-2\delta\tilde{I}),
$$
so that
$$
-\Lambda\mbox{Tr}(\tilde{B}-\tilde{A}-2\delta\tilde{I})\leq F(\tilde{A}+2\delta\tilde{I})-F(\tilde{B})\leq -\lambda \mbox{Tr}(\tilde{B}-\tilde{A}-2\delta\tilde{I})
$$
and, as a consequence, we get that $$F(\tilde{A}+2\delta\tilde{I})-F(\tilde{B})\leq -\lambda \mbox{Tr}(\tilde{B}-\tilde{A}-2\delta\tilde{I})<0.$$ The contradiction follows
from (\ref{mainine}) since we immediately obtain
\begin{equation}
\begin{split}
&c_0L|\hat{x}-\hat{y}|^\alpha \leq F(\tilde{A}+2\delta\tilde{I})-F(B)+f(\hat{y})-f(\hat{x})+u(\hat{y})(c(\hat{y})-c(\hat{x}))\\
&\leq -\lambda \mbox{Tr}(\tilde{B}-\tilde{A}-2\delta\tilde{I})+ L_f|\hat{x}-\hat{y}|^\beta+L_c|\hat{x}-\hat{y}|^{\beta'}\\
&\leq L_f|\hat{x}-\hat{y}|^\beta+L_c|\hat{x}-\hat{y}|^{\beta'}.
\end{split}
\end{equation}
Moreover we  obtain
\begin{equation}
\begin{split}
&c_0\leq \frac{L_f}{L}|\hat{x}-\hat{y}|^{\beta-\alpha}+\frac{L_c}{L}|\hat{x}-\hat{y}|^{\beta'-\alpha}
\end{split}
\end{equation}
that implies a contradiction whenever $\beta,\beta'\geq \alpha$ and $c_0> \frac{L_f}{L}+\frac{L_c}{L}.$

\end{proof}

\section{A non-intrinsic approach}\label{nonintrinsica}
In this section we discuss the operator defined as in Definition \ref{definition1H}
\begin{lemma}
Let us be given three symmetric matrices $P,$ $S_1$ $S_2$ such that $S_1\leq S_2$ and $P\geq 0.$ Then
$$
PS_1P\leq PS_2P.
$$  
\end{lemma}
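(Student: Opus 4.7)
The plan is to reduce the inequality to the definition of positive semidefiniteness. Set $T := S_2 - S_1$. By hypothesis $T$ is symmetric with $T \geq 0$, and $P$ is symmetric with $P \geq 0$. The claim $PS_1P \leq PS_2P$ is equivalent to $PTP \geq 0$, i.e. to showing that $\langle PTP\xi,\xi\rangle \geq 0$ for every $\xi \in \mathbb{R}^n$.

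First I would use the symmetry of $P$ to rewrite
\[
\langle PTP\xi,\xi\rangle = \langle TP\xi, P\xi\rangle.
\]
Setting $\eta := P\xi$, this equals $\langle T\eta,\eta\rangle$, which is nonnegative by the hypothesis $T = S_2 - S_1 \geq 0$. Since $\xi$ was arbitrary, $PTP \geq 0$, and thus $PS_1P \leq PS_2P$.

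There is no real obstacle here; the only point to be careful about is invoking the symmetry of $P$ explicitly (otherwise the transposition step $\langle PTP\xi,\xi\rangle = \langle TP\xi,P\xi\rangle$ would not be immediate). One could alternatively phrase the same argument using square roots: write $T = R^2$ with $R$ symmetric and nonnegative (since $T \geq 0$ is symmetric), so that $PTP = (RP)^T(RP) \geq 0$; but the direct bilinear form argument above is the shortest and avoids invoking the existence of a symmetric square root.
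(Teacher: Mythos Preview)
Your argument is correct. The paper actually states this lemma without proof, so there is no approach to compare against; your direct bilinear-form computation is exactly the standard (and shortest) justification. One small observation you may wish to add: your proof only uses the \emph{symmetry} of $P$ (to pass from $\langle PTP\xi,\xi\rangle$ to $\langle TP\xi,P\xi\rangle$), so the hypothesis $P\geq 0$ is in fact not needed for the conclusion.
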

 \begin{corollary}
 Let us suppose that $A$ and $B$ are  symmetric matrices, $P\geq 0$ for every $x\in \mathbb{G}$ and
 \begin{equation}
 \begin{split}
 \left[
 \begin{array}{rr}
 A,&0\\
0,&-B
 \end{array}
 \right]\leq D^2\phi+\frac{(D^2\phi)^2}{\mu}
 \end{split}
 \end{equation}
 Then
  \begin{equation}
 \begin{split}
& \left[
 \begin{array}{rr}
\sqrt{P}(x) A\sqrt{P}(x),&0\\
0,&-\sqrt{P}(y)B\sqrt{P}(y)
 \end{array}
 \right]\\
 &\leq  \left[
 \begin{array}{rr}
\sqrt{P}(x),&0\\
0,&-\sqrt{P}(y)
 \end{array}
 \right]\left(D^2\phi+\frac{(D^2\phi)^2}{\mu}\right)\left[
 \begin{array}{rr}
\sqrt{P}(x),&0\\
0,&-\sqrt{P}(y)
 \end{array}
 \right]
 \end{split}
 \end{equation}
 \end{corollary}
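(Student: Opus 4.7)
The plan is to derive the claim by a single conjugation of the hypothesis by a suitable symmetric block-diagonal matrix. Specifically, I would set
\[
Q := \left[\begin{array}{cc} \sqrt{P}(x) & 0 \\ 0 & -\sqrt{P}(y) \end{array}\right],
\]
which is symmetric since each diagonal block is symmetric (note that $Q$ is \emph{not} positive semidefinite, because of the minus sign on the lower block, so the preceding lemma does not apply directly in block form).

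The key tool I would invoke is the following mild strengthening of the previous lemma, which dispenses with positivity: for any matrix $R$ and any symmetric matrices $M_1 \leq M_2$, one has $R^T M_1 R \leq R^T M_2 R$. Indeed, for every vector $v$,
\[
\langle R^T M_i R\, v, v\rangle = \langle M_i(Rv), Rv\rangle, \qquad i=1,2,
\]
so applying the ordering $M_1 \leq M_2$ to the vector $Rv$ yields the conclusion. Only symmetry of the $M_i$ is used, not any positivity of $R$.

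Applied to $R = Q$ (so $R^T=R$), $M_1 = \mathrm{diag}(A,-B)$, and $M_2 = D^2\phi + \mu^{-1}(D^2\phi)^2$, the hypothesis yields $Q M_1 Q \leq Q M_2 Q$. A routine block computation then identifies both sides with those in the statement: the lower-right block of $Q M_1 Q$ reduces to $(-\sqrt{P}(y))(-B)(-\sqrt{P}(y)) = -\sqrt{P}(y)\, B\, \sqrt{P}(y)$, so the minus sign survives exactly as claimed on the left, while on the right $Q M_2 Q$ matches the stated upper bound verbatim.

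There is essentially no obstacle here; the only point worth flagging is the motivation for the sign choice in $Q$. Taking instead the PSD block matrix $\mathrm{diag}(\sqrt{P}(x), \sqrt{P}(y))$ would also yield an inequality of the same flavour by direct application of the preceding lemma, but it would produce a different right-hand side (with opposite signs in the off-diagonal blocks of $Q M_2 Q$). The specific form stated in the corollary forces the choice $Q = \mathrm{diag}(\sqrt{P}(x), -\sqrt{P}(y))$ and therefore requires the non-PSD version of the monotonicity principle rather than the previous lemma applied as a black box.
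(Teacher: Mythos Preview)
Your argument is correct and coincides with the paper's intended approach: the paper states this corollary immediately after the lemma ``$S_1\leq S_2$ and $P\geq 0$ imply $PS_1P\leq PS_2P$'' and gives no separate proof, so the implied reasoning is exactly the block-diagonal conjugation you carry out. Your observation that the positivity hypothesis on $P$ in that lemma is actually irrelevant (only $R^T M R$ monotonicity is used) is a valid and useful clarification that the paper glosses over.
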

 At the same time it is true the following result.
  \begin{corollary}\label{corol9}
 Let us suppose that $A,$  $B, $ $N$ and 
 $P$ are  symmetric matrices, $P\geq 0$ for every $x\in \mathbb{G}$ and
 \begin{equation}
 \begin{split}
 \left[
 \begin{array}{rr}
 A,&0\\
0,&-B
 \end{array}
 \right]\leq \left[
 \begin{array}{rr}
N,&-N\\
-N,&N
 \end{array}
 \right]
 \end{split}
 \end{equation}
 Then
  \begin{equation}
 \begin{split}
& \left[
 \begin{array}{rr}
\sqrt{P}(x) A\sqrt{P}(x),&0\\
0,&-\sqrt{P}(y)B\sqrt{P}(y)
 \end{array}
 \right]\\
 &\leq  \left[
 \begin{array}{rr}
\sqrt{P}(x),&0\\
0,&\sqrt{P}(y)
 \end{array}
 \right]\left[
 \begin{array}{rr}
N,&-N\\
-N,&N
 \end{array}
 \right]\left[
 \begin{array}{rr}
\sqrt{P}(x),&0\\
0,&\sqrt{P}(y)
 \end{array}
 \right]
 \end{split}
 \end{equation}
 \end{corollary}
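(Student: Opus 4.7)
The plan is to reduce the claim to the preceding lemma (stating that $PS_1P\leq PS_2P$ whenever $S_1\leq S_2$ are symmetric and $P$ is symmetric positive semidefinite), applied not to a single-block matrix but to the $2n\times 2n$ block-diagonal matrix built from $\sqrt{P(x)}$ and $\sqrt{P(y)}$. This reduces the whole statement to one conceptual observation plus a routine block multiplication.

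Explicitly, I would set
\[
Q \;:=\; \begin{bmatrix} \sqrt{P(x)} & 0 \\ 0 & \sqrt{P(y)} \end{bmatrix},
\]
and observe that $Q$ is symmetric and positive semidefinite, because each diagonal block is the (standard) PSD square root of a symmetric PSD matrix and a block-diagonal matrix with symmetric PSD blocks is itself symmetric PSD. Letting also
\[
S_1 \;:=\; \begin{bmatrix} A & 0 \\ 0 & -B \end{bmatrix}, \qquad S_2 \;:=\; \begin{bmatrix} N & -N \\ -N & N \end{bmatrix},
\]
the hypothesis of the corollary is precisely $S_1\leq S_2$, so the preceding lemma applied with $P$ replaced by $Q$ yields $QS_1Q \leq QS_2Q$.

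It then remains to verify by block multiplication that $QS_1Q$ coincides with the left-hand side of the desired inequality and $QS_2Q$ with its right-hand side; both identities are immediate since $Q$ is block diagonal, so the conjugation acts on each block independently. There is essentially no obstacle here: the only nontrivial input is that congruence by a symmetric PSD matrix preserves the Loewner order, which is exactly the content of the lemma quoted, and the remainder is bookkeeping in $2\times 2$ block form.
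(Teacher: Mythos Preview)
Your proposal is correct and is precisely the intended argument: the paper states this corollary without proof, as an immediate consequence of the preceding lemma $PS_1P\leq PS_2P$ applied with the block-diagonal matrix $Q=\mathrm{diag}(\sqrt{P(x)},\sqrt{P(y)})$ in place of $P$. The block-multiplication check you outline is exactly what is needed to identify $QS_1Q$ and $QS_2Q$ with the two sides of the claimed inequality.
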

 \begin{lemma}
Let be $N$ be a given symmetric matrix and $P\geq 0.$ Let us denote
  \begin{equation}
 \begin{split}
 K=\left[
 \begin{array}{rr}
\sqrt{P}(x),&0\\
0,&\sqrt{P}(y)
 \end{array}
 \right]\left[
 \begin{array}{rr}
N,&-N\\
-N,&N
 \end{array}
 \right]\left[
 \begin{array}{rr}
\sqrt{P}(x),&0\\
0,&\sqrt{P}(y)
 \end{array}
 \right]
 \end{split}
 \end{equation}
 Then for every $\xi_1,\xi_2\in \mathbb{R}^n$
 $$
 \langle K[\xi_1,\xi_2]^T,[\xi_1,\xi_2]\rangle=\langle N(\sqrt{P(x)}\xi_1-\sqrt{P(y)}\xi_2),(\sqrt{P(x)}\xi_1-\sqrt{P(y)}\xi_2)\rangle
 $$
 and
   \begin{equation}
 \begin{split}
& \langle \sqrt{P}(x) A\sqrt{P}(x)\xi_1,\xi_1\rangle-\langle  \sqrt{P}(y) B\sqrt{P}(y)\xi_2,\xi_2\rangle\\
 &\leq  \langle N(\sqrt{P(x)}\xi_1-\sqrt{P(y)}\xi_2),(\sqrt{P(x)}\xi_1-\sqrt{P(y)}\xi_2)\rangle.
  \end{split}
 \end{equation}
 \end{lemma}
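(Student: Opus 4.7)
The plan is to carry out a direct block-matrix computation and then combine it with the matrix inequality already supplied by Corollary \ref{corol9}. There is no real obstacle here; the proof is essentially bookkeeping on a $2\times 2$ block matrix with symmetric blocks, and the second assertion is just a specialization of Corollary \ref{corol9} applied to the test vector $[\xi_1,\xi_2]^T$.

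First I would multiply the three matrices out explicitly. Because $\sqrt{P(x)}$ and $\sqrt{P(y)}$ are symmetric and the outer two factors are block-diagonal, the product $K$ has the clean form
\begin{equation*}
K=\begin{pmatrix}\sqrt{P(x)}\,N\sqrt{P(x)} & -\sqrt{P(x)}\,N\sqrt{P(y)}\\ -\sqrt{P(y)}\,N\sqrt{P(x)} & \sqrt{P(y)}\,N\sqrt{P(y)}\end{pmatrix}.
\end{equation*}
Evaluating the quadratic form on $[\xi_1,\xi_2]^T$ produces four scalar terms. Using $(\sqrt{P})^T=\sqrt{P}$ and $N^T=N$, I would move the $\sqrt{P}$ factors to the other side of the inner product in each term, obtaining
\begin{equation*}
\langle K[\xi_1,\xi_2]^T,[\xi_1,\xi_2]\rangle=\langle N\sqrt{P(x)}\xi_1,\sqrt{P(x)}\xi_1\rangle-2\langle N\sqrt{P(x)}\xi_1,\sqrt{P(y)}\xi_2\rangle+\langle N\sqrt{P(y)}\xi_2,\sqrt{P(y)}\xi_2\rangle,
\end{equation*}
which is immediately recognized as
\begin{equation*}
\langle N\bigl(\sqrt{P(x)}\xi_1-\sqrt{P(y)}\xi_2\bigr),\bigl(\sqrt{P(x)}\xi_1-\sqrt{P(y)}\xi_2\bigr)\rangle.
\end{equation*}
This is the first identity.

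For the second inequality I would invoke Corollary \ref{corol9}, which gives the block matrix inequality
\begin{equation*}
\begin{pmatrix}\sqrt{P(x)}A\sqrt{P(x)} & 0\\ 0 & -\sqrt{P(y)}B\sqrt{P(y)}\end{pmatrix}\leq K.
\end{equation*}
Testing this inequality against the vector $[\xi_1,\xi_2]^T$ yields on the left hand side $\langle\sqrt{P(x)}A\sqrt{P(x)}\xi_1,\xi_1\rangle-\langle\sqrt{P(y)}B\sqrt{P(y)}\xi_2,\xi_2\rangle$, and by the identity just established the right hand side equals $\langle N(\sqrt{P(x)}\xi_1-\sqrt{P(y)}\xi_2),(\sqrt{P(x)}\xi_1-\sqrt{P(y)}\xi_2)\rangle$. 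This finishes the proof.

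The only subtlety worth double-checking is the symmetry bookkeeping: since $P(x)\geq 0$, the square root $\sqrt{P(x)}$ is well defined and symmetric, which is what allows one to absorb it into the argument of the inner product in the cross term. No new ideas beyond Corollary \ref{corol9} and ordinary block-matrix arithmetic are required.
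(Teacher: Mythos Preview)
Your proof is correct and follows essentially the same route as the paper: both compute the quadratic form by pushing the block-diagonal $\sqrt{P}$ factors onto the test vector (the paper does this by introducing $v=[\sqrt{P(x)}\xi_1,\sqrt{P(y)}\xi_2]^T$ and pairing it with the middle block matrix, you by multiplying out $K$ first), and the second inequality is obtained exactly as you say, by testing Corollary~\ref{corol9} against $[\xi_1,\xi_2]^T$.
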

 \begin{proof}
 Let $[\xi_1,\xi_2]\in \mathbb{R}^{2m},$ then, by the symmetric properties,  applying the definition and denoting $$
v=\left[
 \begin{array}{rr}
\sqrt{P}(x),&0\\
0,&\sqrt{P}(y)
 \end{array}
 \right] [\xi_1,\xi_2]^T=\left[
 \begin{array}{r}
\sqrt{P}(x)\xi_1\\
\sqrt{P}(y)\xi_2
 \end{array}
 \right]
$$ 
we get that
 
   \begin{equation*}
 \begin{split}
&\langle\left[
 \begin{array}{rr}
N,&-N\\
-N,&N
 \end{array}
 \right]v,v\rangle =\langle N\sqrt{P(x)}\xi_1,\sqrt{P(x)}\xi_1\rangle-\langle N\sqrt{P(y)}\xi_2,\sqrt{P(x)}\xi_1\rangle\\
 &-\langle N\sqrt{P(x)}\xi_1,\sqrt{P(y)}\xi_2\rangle+\langle N\sqrt{P(y)}\xi_2,\sqrt{P(y)}\xi_2\rangle\\
 &=\langle N(\sqrt{P(x)}\xi_1-\sqrt{P(y)}\xi_2),\sqrt{P(x)}\xi_1-\sqrt{P(y)}\xi_2\rangle\\
 &\leq |N| |\sqrt{P(x)}\xi_1-\sqrt{P(y)}\xi_2|^2. \end{split}
 \end{equation*}

 \end{proof}

 \begin{corollary}\label{matrix1}
 Let us suppose that $A,$  $B,$  $M$ and $P$ are  symmetric matrices $m\times m$, $P\geq 0$ for every $x\in \mathbb{G}\equiv\mathbb{R}^m$ and
 \begin{equation}
 \begin{split}
 \left[
 \begin{array}{rr}
 A,&0\\
0,&-B
 \end{array}
 \right]\leq \left[
 \begin{array}{rr}
N,&-N\\
-N,&N
 \end{array}
 \right].
 \end{split}
 \end{equation}
 Then
   \begin{equation}
 \begin{split}
& \mbox{Tr}(P(x)A-P(y) B)\leq  m|N||\sqrt{P(x)}-\sqrt{P(y)}|^2.
  \end{split}
 \end{equation}
 \end{corollary}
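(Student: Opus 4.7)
The plan is to deduce the trace inequality from the preceding lemma by choosing $\xi_1=\xi_2=e_i$ for each vector of a fixed orthonormal basis and summing. First I would fix an orthonormal basis $\{e_i\}_{i=1}^m$ of $\R^m$. By the lemma immediately above, applied with $\xi_1=\xi_2=e_i$, we have for each $i$
\begin{equation*}
\langle \sqrt{P(x)}A\sqrt{P(x)}e_i,e_i\rangle -\langle \sqrt{P(y)}B\sqrt{P(y)}e_i,e_i\rangle \leq \langle N(\sqrt{P(x)}-\sqrt{P(y)})e_i,(\sqrt{P(x)}-\sqrt{P(y)})e_i\rangle.
\end{equation*}

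Next I would sum over $i=1,\dots,m$. On the left side the sum equals $\mathrm{Tr}(\sqrt{P(x)}A\sqrt{P(x)})-\mathrm{Tr}(\sqrt{P(y)}B\sqrt{P(y)})$, and by the cyclic property of the trace this is $\mathrm{Tr}(P(x)A)-\mathrm{Tr}(P(y)B)=\mathrm{Tr}(P(x)A-P(y)B)$, which is exactly the quantity to be estimated.

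For the right side I would use the elementary bound $\langle Nv,v\rangle\leq |N|\,|v|^2$, where $|N|$ denotes the operator norm. Writing $Q:=\sqrt{P(x)}-\sqrt{P(y)}$, the sum of $|Qe_i|^2$ is precisely $\mathrm{Tr}(Q^TQ)=\|Q\|_{HS}^2$. For an $m\times m$ symmetric matrix one has $\|Q\|_{HS}^2\leq m|Q|^2$ (the Hilbert--Schmidt norm is at most $\sqrt{m}$ times the operator norm), which yields
\begin{equation*}
\sum_{i=1}^m \langle NQe_i,Qe_i\rangle \leq |N|\sum_{i=1}^m|Qe_i|^2 \leq m|N|\,|\sqrt{P(x)}-\sqrt{P(y)}|^2.
\end{equation*}
Combining the two sides gives the desired inequality.

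There is no real obstacle here; the only point that requires care is distinguishing the operator norm from the Hilbert--Schmidt norm so that the factor $m$ appears correctly, and checking that the cyclic trace identity $\mathrm{Tr}(\sqrt{P}A\sqrt{P})=\mathrm{Tr}(PA)$ is applied in the right order. Everything else is a direct application of the preceding lemma and summation over an orthonormal basis.
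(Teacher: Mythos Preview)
Your argument is correct and follows essentially the same route as the paper: apply the preceding lemma with $\xi_1=\xi_2$ equal to the vectors of an orthonormal basis, sum, and use the cyclic trace identity on the left together with the operator-norm bound on the right. The only cosmetic difference is that the paper chooses the eigenbasis of $\sqrt{P(x)}A\sqrt{P(x)}-\sqrt{P(y)}B\sqrt{P(y)}$ and bounds each right-hand term individually by $|N|\,|\sqrt{P(x)}-\sqrt{P(y)}|^2$, whereas you pass through the Hilbert--Schmidt norm; both yield the same factor $m$.
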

 \begin{proof}
 Let $\xi_1=\xi_2,$  and $|\xi_1|=1$ then
   \begin{equation}
 \begin{split}
&\langle (\sqrt{P(x)} A\sqrt{P(x)}-\sqrt{P(y)} B\sqrt{P(y)})\xi_1,\xi_1\rangle\\
&=\langle \sqrt{P(x)} A\sqrt{P(x)}\xi_1,\xi_1\rangle-\langle  \sqrt{P(y)} B\sqrt{P(y)}\xi_1,\xi_1\rangle\\
 &\leq  \langle N(\sqrt{P(x)}-\sqrt{P(y)})\xi_1,(\sqrt{P(x)}-\sqrt{P(y)})\xi_1\rangle\leq |N||\sqrt{P(x)}-\sqrt{P(y)}|^2
  \end{split}
 \end{equation}
 Thus, by taking an orthonormal basis given by the eigenvectors of $$\sqrt{P(x)} A\sqrt{P(x)}-\sqrt{P(y)} B\sqrt{P(y)},$$ we obtain our thesis. Indeed
  \begin{equation}
 \begin{split}
&\mbox{Tr}(P(x)A-P(y) B)=\mbox{Tr}(P(x)A)-\mbox{Tr}(P(y) B)\\
&=\mbox{Tr}(\sqrt{P(x)} A\sqrt{P(x)})-\mbox{Tr}(\sqrt{P(y)} B\sqrt{P(y)})\\
&=\mbox{Tr}(\sqrt{P(x)} A\sqrt{P(x)}-\sqrt{P(y)} B\sqrt{P(y)})\leq m|N||\sqrt{P(x)}-\sqrt{P(y)}|^2.
  \end{split}
 \end{equation}
 \end{proof}

\begin{remark}
In the Heisenberg group $\mathbb{H}^1,$ $m=3.$
\end{remark}
\begin{remark}
It is now interesting to remark that we can not expect that there exists, in general, an eigenvector $w$ such that 
  \begin{equation}
 \begin{split}
 \left[
 \begin{array}{rr}
\sqrt{P(x)},&0\\
0,&\sqrt{P(y)}
 \end{array}
 \right]\left[
 \begin{array}{rr}
N,&-N\\
-N,&N
 \end{array}
 \right]\left[
 \begin{array}{rr}
\sqrt{P(x)},&0\\
0,&\sqrt{P(y)}
 \end{array}
 \right]w=\lambda w
 \end{split}
 \end{equation}
 and such that its eigenvalue is strictly negative for every couple of $x$ and $y.$ Indeed, if it exists, then it should be also true that
 $$
 \sqrt{P(x)}\xi_1-\sqrt{P(y)}\xi_2\in\mathbb{R}\{\frac{x-y}{|x-y|}\},
 $$
 possibly  taking some appropriate vectors $\xi_1\in \mathbb{R}^m,$ $\xi_2\in \mathbb{R}^m.$ Nevertheless, considering the degenerateness of $P,$ for every $x\in \mathbb{G},$ it is possible to prove that  given $\frac{x-y}{|x-y|},$  for some particular $x$ and $y,$ it is not possible to find $\xi_1$ and $\xi_2$ such that 
  $$
 \sqrt{P(x)}\xi_1-\sqrt{P(y)}\xi_2\in\mathbb{R}\{\frac{x-y}{|x-y|}\}.
 $$
 For example in $\mathbb{H}^1$ in the Heisenberg group, where $m=3,$ if $x=(0,x_3)$ and $y=(0,y_3)$  and $x_3\not=y_3,$ then for every $\xi_1,\xi_2\in\mathbb{R}^3\setminus\{(0,0,0)\}$
 $$
 \sqrt{P(x)}\xi_1-\sqrt{P(y)}\xi_2\not =(0,0,\pm 1).
 $$
 \end{remark}
 \subsection{The square root matrix in the Heisenberg group}
 The square root of $P_{\mathbb{H}^1}(x)$ is
  \begin{equation}
\sqrt{P_{\mathbb{H}^1}(x)}
 =\left[
 \begin{array}{lll}
\frac{ x_2^2+\frac{x_1^2}{\sqrt{1+4(x_1^2+x_2^2)}}}{x_1^2+x_2^2},&\frac{x_1x_2(1-\frac{1}{\sqrt{1+4(x_1^2+x_2^2)}})}{x_1^2+x_2^2},&\frac{2x_2}{\sqrt{1+4(x_1^2+x_2^2)}}\\
\frac{x_1x_2(1-\frac{1}{\sqrt{1+4(x_1^2+x_2^2)}})}{x_1^2+x_2^2},&\frac{ x_1^2+\frac{x_2^2}{\sqrt{1+4(x_1^2+x_2^2)}}}{x_1^2+x_2^2},&-\frac{2x_1}{\sqrt{1+4(x_1^2+x_2^2)}}\\
\frac{2x_2}{\sqrt{1+4(x_1^2+x_2^2)}},&-\frac{2x_1}{\sqrt{1+4(x_1^2+x_2^2)}},&\frac{4(x_1^2+x_2^2)}{\sqrt{1+4(x_1^2+x_2^2)}}
 \end{array}
 \right]
 \end{equation}

On the other hand  by an elementary algebraic manipulation of $\sqrt{P_{\mathbb{H}^1}(x)}$ we get 
  \begin{equation}
  \begin{split}
&\sqrt{P_{\mathbb{H}^1}(x)}\\
 &=\left[
 \begin{array}{lll}
1-\frac{4x_1^2}{(1+\sqrt{1+4(x_1^2+x_2^2)})\sqrt{1+4(x_1^2+x_2^2)}},&\frac{4x_1x_2}{(1+\sqrt{1+4(x_1^2+x_2^2)})\sqrt{1+4(x_1^2+x_2^2)}},&\frac{2x_2}{\sqrt{1+4(x_1^2+x_2^2)}}\\
\frac{4x_1x_2}{(1+\sqrt{1+4(x_1^2+x_2^2)})\sqrt{1+4(x_1^2+x_2^2)}},&1-\frac{4x_2^2}{(1+\sqrt{1+4(x_1^2+x_2^2)})\sqrt{1+4(x_1^2+x_2^2)}},&-\frac{2x_1}{\sqrt{1+4(x_1^2+x_2^2)}}\\
\frac{2x_2}{\sqrt{1+4(x_1^2+x_2^2)}},&-\frac{2x_1}{\sqrt{1+4(x_1^2+x_2^2)}},&\frac{4(x_1^2+x_2^2)}{\sqrt{1+4(x_1^2+x_2^2)}}
 \end{array}
 \right]
 \end{split}
 \end{equation}
 that is, in particular, we can conclude that $\sqrt{P_{\mathbb{H}^1}(x)}$ is still a $C^{\infty}$ matrix and moreover the following result holds.

 \begin{lemma}\label{estimateonsquareroot}
There exists $C_2>0$ such that for every $\gamma>0$  and for every  $x,y\in \mathbb{H}^1,$ such that $\gamma<|x-y|<\gamma^{-1}$  and $|x|\to\infty$ and $|y|\to\infty,$ then 
 $$
 |\sqrt{P_{\mathbb{H}^1}(x)}-\sqrt{P_{\mathbb{H}^1}(y)}|= |\sqrt{P_{\mathbb{H}^1}(x')}-\sqrt{P_{\mathbb{H}^1}(y')}|\leq C_2|x'-y'|.
 $$
 \end{lemma}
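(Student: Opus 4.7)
The plan is to reduce the statement to a standard Lipschitz estimate for a smooth matrix-valued function of two real variables.

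First, I would exploit the invariance of $P_{\mathbb{H}^1}$ in the vertical direction. Direct inspection of the explicit formula for $\sqrt{P_{\mathbb{H}^1}(x)}$ given just above the statement shows that every entry depends only on the horizontal coordinates $x' = (x_1,x_2)$ and not on $x_3$. Writing $\sqrt{P_{\mathbb{H}^1}(x)} = \sqrt{P_{\mathbb{H}^1}(x')}$ with this slight abuse of notation, the first equality $|\sqrt{P_{\mathbb{H}^1}(x)}-\sqrt{P_{\mathbb{H}^1}(y)}| = |\sqrt{P_{\mathbb{H}^1}(x')}-\sqrt{P_{\mathbb{H}^1}(y')}|$ is immediate, and the task reduces to proving
\[
|\sqrt{P_{\mathbb{H}^1}(x')} - \sqrt{P_{\mathbb{H}^1}(y')}| \leq C_2\, |x'-y'|
\]
for all $x',y' \in \mathbb{R}^2$, with a constant $C_2$ independent of $x'$ and $y'$.

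Second, I would establish a uniform bound on the Euclidean gradient of every entry of $\sqrt{P_{\mathbb{H}^1}}$ viewed as a function of $x' \in \mathbb{R}^2$. Setting $\rho = \rho(x') = \sqrt{1+4|x'|^2}$, each entry is a smooth rational expression in $x_1,x_2,\rho$, and $\partial_i\rho = 4x_i/\rho$ satisfies $|\partial_i\rho|\le 2$. The only entry that is unbounded at infinity is the $(3,3)$ coefficient $4|x'|^2/\rho$, which grows like $2|x'|$; however a direct computation yields
\[
\partial_i\!\left(\frac{4|x'|^2}{\rho}\right) = \frac{8x_i\bigl(1+2|x'|^2\bigr)}{\rho^3},
\]
which is $O(|x_i|/|x'|)$ and hence bounded on $\mathbb{R}^2$. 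The off-diagonal entries $\pm 2x_i/\rho$ tend to $0$ at infinity, and the $(1,1),(2,2),(1,2)$ corrections of the form $1-4x_1^2/((1+\rho)\rho)$, $4x_1x_2/((1+\rho)\rho)$ converge to rational expressions in $x_i/|x'|$ at infinity; all their first partials are checked in the same way to be globally bounded, taking $C_2$ as the maximum of these bounds (times a dimensional factor).

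Third, once a global uniform bound $\|\nabla_{x'}\sqrt{P_{\mathbb{H}^1}}\|_{L^\infty(\mathbb{R}^2)} \leq C_2$ is in hand, the Lipschitz estimate follows from the fundamental theorem of calculus applied componentwise along the straight segment joining $x'$ to $y'$. The role of the conditions $\gamma < |x-y| < \gamma^{-1}$ and $|x|,|y|\to\infty$ is only to identify the regime in which this lemma will be applied in the sequel; the estimate itself holds on all of $\mathbb{R}^2 \times \mathbb{R}^2$. I expect the main (and essentially only) obstacle to lie in step two: the $(3,3)$ entry grows at infinity, so one must track the algebraic cancellations carefully in order to verify that no partial derivative blows up. There is no conceptual difficulty beyond this bookkeeping.
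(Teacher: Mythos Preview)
Your argument is correct and in fact slightly stronger than what the paper proves. The paper does not differentiate: it singles out the only unbounded entry, the $(3,3)$ coefficient $4|x'|^2/\sqrt{1+4|x'|^2}$, writes the difference of that entry at $x'$ and $y'$ as $4(|x'|-|y'|)$ times an explicit algebraic factor, and then checks that this factor stays bounded under the constraint $|x-y|<\gamma^{-1}$ (which keeps $|x'|$ and $|y'|$ comparable); the remaining entries are left implicit. Your route --- verifying that every entry of $\sqrt{P_{\mathbb{H}^1}}$ has globally bounded first partials in $x'\in\mathbb{R}^2$ and then integrating along the segment --- bypasses the $\gamma$-constraint entirely and yields a uniform Lipschitz bound on all of $\mathbb{R}^2\times\mathbb{R}^2$, which is why you correctly observe that the hypotheses $\gamma<|x-y|<\gamma^{-1}$ and $|x|,|y|\to\infty$ play no role in your estimate. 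Both approaches hinge on the same observation, namely that the growth of the $(3,3)$ entry is linear in $|x'|$ so the Lipschitz constant does not blow up; the paper extracts this by an algebraic difference-quotient factorisation, you by computing $\partial_i\bigl(4|x'|^2/\rho\bigr)=8x_i(1+2|x'|^2)/\rho^3$ directly.
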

 \begin{proof}
 Let us consider for simplicity only the unbounded coefficient  of $\sqrt{P_{\mathbb{H}^1}(x)},$ as $|x|\to\infty,$ given by $\frac{4(x_1^2+x_2^2)}{\sqrt{1+4(x_1^2+x_2^2)}}.$ Let us denote $|x'|=\rho$ and $|y'|=r.$ Then
 \begin{equation*}
 \begin{split}
 &\frac{4(x_1^2+x_2^2)}{\sqrt{1+4(x_1^2+x_2^2)}}-\frac{4(y_1^2+y_2^2)}{\sqrt{1+4(y_1^2+y_2^2)}}=\frac{4\rho^2}{\sqrt{1+4\rho^2}}-\frac{4r^2}{\sqrt{1+4r^2}}\\
 &=\frac{4(\rho^2-r^2)}{\sqrt{1+4\rho^2}}+4r^2\left(\frac{1}{\sqrt{1+4\rho^2}}-\frac{1}{\sqrt{1+4r^2}}\right)\\
 &=4(\rho-r)\frac{(\rho+r)}{\sqrt{1+4\rho^2}}+4r^2
 \left(\frac{\sqrt{1+4r^2}-\sqrt{1+4\rho^2}}{\sqrt{1+4\rho^2}\sqrt{1+4r^2}}\right)\\
 &=4(\rho-r)\frac{(\rho+r)}{\sqrt{1+4\rho^2}}+\frac{4r^2}{\sqrt{1+4\rho^2}\sqrt{1+4r^2}}\frac{4(r^2-\rho^2}{\sqrt{1+4\rho^2}+\sqrt{1+4r^2}}\\
 &=4(\rho-r)(R_1+R_2)
 \end{split}
 \end{equation*}
 where
 $$
 R_1=\frac{(\rho+r)}{\sqrt{1+4\rho^2}}
 $$
 and
 $$
 R_2=\frac{4r^2(\rho+r)}{\sqrt{1+4\rho^2}\sqrt{1+4r^2}(\sqrt{1+4\rho^2}+\sqrt{1+4r^2})}.
 $$
 Hence $R_1+R_2$ are uniformly bounded whenever $\gamma<|x-y|<\gamma^{-1}$  and $|x|\to\infty$ and $|y|\to\infty,$ let say by $C_2$
 
 Thus
 \begin{equation*}
 \begin{split}
 &|\frac{4(x_1^2+x_2^2)}{\sqrt{1+4(x_1^2+x_2^2)}}-\frac{4(y_1^2+y_2^2)}{\sqrt{1+4(y_1^2+y_2^2)}}|\leq 4|\rho-r|(R_1+R_2)\leq C_2 |x'-y'|.
 \end{split}
 \end{equation*}
 \end{proof}
 Moreover the following global result is still true

 As a consequence we get the following Corollary.
  \begin{corollary}\label{osservazione2}
  Let us suppose that $A,$  $B, $ $N$ and 
  are  symmetric matrices $3\times 3$ such that,  for every $x,y\in \mathbb{H}^1,$  
 \begin{equation}
 \begin{split}
 \left[
 \begin{array}{rr}
 A,&0\\
0,&-B
 \end{array}
 \right]\leq \left[
 \begin{array}{rr}
N,&-N\\
-N,&N
 \end{array}
 \right].
 \end{split}
 \end{equation}
 There exists $C>0$ such that for every  $x,y\in \mathbb{H}^1,$  $\gamma<|x-y|<\gamma^{-1}$  and $|x|\to\infty$ and $|y|\to\infty,$  
  \begin{equation}
 \begin{split}
& \mbox{Tr}(P_{\mathbb{H}^1}(x)A-P_{\mathbb{H}^1}(y) B)\leq C|N||x'-y'|^2.
  \end{split}
 \end{equation}
 \end{corollary}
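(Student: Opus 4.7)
The plan is to obtain the claim by directly composing the two results already available: Corollary \ref{matrix1}, which estimates the trace difference $\mbox{Tr}(P(x)A-P(y)B)$ by the Frobenius-type quantity $|N|\,|\sqrt{P(x)}-\sqrt{P(y)}|^{2}$, and Lemma \ref{estimateonsquareroot}, which controls $|\sqrt{P_{\mathbb{H}^1}(x)}-\sqrt{P_{\mathbb{H}^1}(y)}|$ by $|x'-y'|$ in the asymptotic regime under consideration.

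Concretely, I would first invoke Corollary \ref{matrix1} with $m=3$ (since $P_{\mathbb{H}^1}$ is a $3\times 3$ non-negative matrix on $\mathbb{H}^1\equiv\mathbb{R}^3$) and with the particular $N$ for which the matrix inequality in the hypothesis holds. This yields
$$
\mbox{Tr}\bigl(P_{\mathbb{H}^1}(x)A-P_{\mathbb{H}^1}(y)B\bigr)\leq 3|N|\,\bigl|\sqrt{P_{\mathbb{H}^1}(x)}-\sqrt{P_{\mathbb{H}^1}(y)}\bigr|^{2}.
$$
Then I would apply Lemma \ref{estimateonsquareroot}: under the stated asymptotic hypothesis ($\gamma<|x-y|<\gamma^{-1}$ and $|x|,|y|\to\infty$), the Lemma provides a constant $C_{2}>0$, independent of $x,y$ in this range, such that
$$
\bigl|\sqrt{P_{\mathbb{H}^1}(x)}-\sqrt{P_{\mathbb{H}^1}(y)}\bigr|\leq C_{2}\,|x'-y'|.
$$
Squaring and plugging in gives the bound with $C:=3C_{2}^{2}$.

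The only thing to verify carefully is that the regime appearing in the conclusion ($\gamma<|x-y|<\gamma^{-1}$, $|x|,|y|\to\infty$) is exactly the one in which Lemma \ref{estimateonsquareroot} was proved, so that the constant $C_{2}$ is legitimate there; this is indeed built into the hypotheses so no additional work is required. The main point of substance already lies in the proof of Lemma \ref{estimateonsquareroot}, where the potentially singular coefficient $\frac{4(x_{1}^{2}+x_{2}^{2})}{\sqrt{1+4(x_{1}^{2}+x_{2}^{2})}}$ of $\sqrt{P_{\mathbb{H}^1}}$ is shown to be $C_{2}$-Lipschitz in $x'$ under the restriction on $|x-y|$; once this is granted, the present Corollary is just an immediate chaining of two inequalities, and no further estimate is required.
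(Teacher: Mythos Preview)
Your proposal is correct and follows exactly the paper's own argument: apply Corollary~\ref{matrix1} with $m=3$ to bound the trace by $3|N|\,|\sqrt{P_{\mathbb{H}^1}(x)}-\sqrt{P_{\mathbb{H}^1}(y)}|^{2}$, then invoke Lemma~\ref{estimateonsquareroot} to control the square-root difference by $C_{2}|x'-y'|$. Your constant $C=3C_{2}^{2}$ is in fact the correct one (the paper writes $C=3C_{2}$, which appears to be a minor slip since the Lipschitz bound must be squared).
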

 \begin{proof}
 Recalling Corollary \ref{matrix1} we get 
  \begin{equation}
 \begin{split}
& \mbox{Tr}(P_{\mathbb{H}^1}(x)A-P_{\mathbb{H}^1}(y) B)\leq  3|N||\sqrt{P_{\mathbb{H}^1}(x)}-\sqrt{P_{\mathbb{H}^1}(y)}|^2.
  \end{split}
 \end{equation}
 Thus invoking the previous estimate of $\sqrt{P_{\mathbb{H}^1}}$ contained in Lemma \ref{estimateonsquareroot} we conclude that there exists a positive constant $C=3C_2$ such that for every $x,y\in B_1(\bar{x}),$
  \begin{equation}
 \begin{split}
& \mbox{Tr}(P_{\mathbb{H}^1}(x)A-P_{\mathbb{H}^1}(y) B)\leq C|N| |x'-y'|^2.
  \end{split}
 \end{equation}
 \end{proof}
 \begin{corollary}
 There exists a positive constant $C$ such that  if $A$ and $B$ are the matrices determined by the Theorem of the sums applied to the function $w,$  then
 $$
  \mbox{Tr}(P_{\mathbb{H}^1}(\hat{x})A-P_{\mathbb{H}^1}(\hat{y}) B)\leq C\left( L\alpha |\hat{x}-\hat{y}|^{\alpha}+\frac{2}{\mu}L^2\alpha^2 |\hat{x}-\hat{y}|^{2\alpha-2}\right).
 $$
 \end{corollary}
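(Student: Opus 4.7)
The plan is to chain together two ingredients that have already been established in this section. By assumption the matrices $A$ and $B$ come from the Theorem of the sums applied to $w(x,y) = u(x) - u(y) - L|x-y|^\alpha - \delta|x|^2 - \epsilon$, so they satisfy the matrix inequality (\ref{eq:Sums}), which after using the explicit computations (\ref{emme1})--(\ref{emme2}) to regroup the right-hand side reads
$$
\left[\begin{array}{cc} A & 0 \\ 0 & -B \end{array}\right] \leq \left[\begin{array}{cc} N & -N \\ -N & N \end{array}\right]
$$
with $N = M + \tfrac{2}{\mu}M^2$ as introduced in Remark \ref{osservazione1}.

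First I would invoke Corollary \ref{osservazione2} with this choice of $A$, $B$, $N$ at the points $\hat{x}, \hat{y}$, which produces
$$
\mathrm{Tr}\bigl(P_{\mathbb{H}^1}(\hat{x})A - P_{\mathbb{H}^1}(\hat{y})B\bigr) \leq C|N|\,|\hat{x}' - \hat{y}'|^2,
$$
where $C = 3C_2$ and $C_2$ is the Lipschitz constant for $\sqrt{P_{\mathbb{H}^1}}$ supplied by Lemma \ref{estimateonsquareroot}. Next I would substitute the explicit operator-norm bound on $N$ recorded in Remark \ref{osservazione1},
$$
|N| \leq L\alpha|\hat{x}-\hat{y}|^{\alpha-2} + \tfrac{2}{\mu}L^2\alpha^2|\hat{x}-\hat{y}|^{2(\alpha-2)},
$$
and combine it with the trivial estimate $|\hat{x}' - \hat{y}'|^2 \leq |\hat{x} - \hat{y}|^2$ coming from the fact that $\hat{x}'$ denotes the horizontal projection of $\hat{x}$. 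Multiplying through gives
$$
|N|\,|\hat{x}' - \hat{y}'|^2 \leq L\alpha|\hat{x}-\hat{y}|^{\alpha} + \tfrac{2}{\mu}L^2\alpha^2|\hat{x}-\hat{y}|^{2\alpha-2},
$$
and reabsorbing $C$ into the final constant yields exactly the stated inequality.

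The only point that deserves a moment of care is the applicability of Lemma \ref{estimateonsquareroot}: although that lemma is phrased for the asymptotic regime $\gamma < |x-y| < \gamma^{-1}$, $|x|, |y| \to \infty$, the rearranged expression for $\sqrt{P_{\mathbb{H}^1}(x)}$ displayed just before the lemma shows that every entry of $\sqrt{P_{\mathbb{H}^1}(x)} - \sqrt{P_{\mathbb{H}^1}(y)}$ factors out $x' - y'$ against a function of $|x'|, |y'|$ that remains uniformly bounded. Thus the Lipschitz estimate $|\sqrt{P_{\mathbb{H}^1}(x)} - \sqrt{P_{\mathbb{H}^1}(y)}| \leq C_2 |x' - y'|$ holds globally on $\mathbb{H}^1$, and the constant $C$ obtained is genuinely independent of $\hat{x}, \hat{y}$. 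This uniformity is the main (and only) obstacle, since without it the resulting bound could not be fed back into the contradiction scheme of Section \ref{preliminary}; once uniformity is granted the corollary is pure bookkeeping.
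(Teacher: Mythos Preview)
Your proof is correct and follows exactly the same route as the paper: invoke Corollary \ref{osservazione2} to get $\mathrm{Tr}(P_{\mathbb{H}^1}(\hat{x})A-P_{\mathbb{H}^1}(\hat{y})B)\leq C|N|\,|\hat{x}'-\hat{y}'|^2$, then plug in the bound on $|N|$ from Remark \ref{osservazione1} and use $|\hat{x}'-\hat{y}'|^2\leq |\hat{x}-\hat{y}|^2$. Your extra paragraph addressing the uniformity of the Lipschitz constant in Lemma \ref{estimateonsquareroot} is a welcome clarification that the paper leaves implicit.
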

 \begin{proof}
 Recalling the Remark \ref{osservazione1} and Corollary \ref{osservazione2} we conclude that 
  \begin{equation}
 \begin{split}
& \mbox{Tr}(P_{\mathbb{H}^1}(\hat{x})A-P_{\mathbb{H}^1}(\hat{y}) B)\\
&\leq C\left( L\alpha |\hat{x}-\hat{y}|^{\alpha-2}+\frac{2}{\mu}L^2\alpha^2 |\hat{x}-\hat{y}|^{2(\alpha-2)}\right) |\hat{x}'-\hat{y}'|^2\\
&\leq C\left( L\alpha |\hat{x}-\hat{y}|^{\alpha}+\frac{2}{\mu}L^2\alpha^2 |\hat{x}-\hat{y}|^{2\alpha-2}\right) 
  \end{split}
 \end{equation}
 \end{proof}
 With these estimates we can replicate the proof of the main theorem already given for operators satisfying Definition \ref{definition2H} covering also the case of operators defined like in Definition \ref{definition1H}.


\begin{thebibliography}{100}

\bibitem{AGT}\text{F. Abedin; C.E. Guti\'errez; G. Tralli,} Harnack's inequality for a class of non-divergent equations in the Heisenberg group preprint (2017) arXiv:1705.10856v1.

\bibitem{Baetaltri}\text{Z. Balogh; A. Calogero; A. Kristály,} Sharp comparison and maximum principles via horizontal normal mapping in the Heisenberg group. J. Funct. Anal. 269 (2015), no. 9, 2669--2708.

\bibitem{BM} \text{M. Bardi, P. Mannucci}: On the Dirichlet problem for non-totally degenerate fully nonlinear elliptic equations. Commun. Pure Appl. Anal. $5$ (2006), no. 4, 709--731. 

\bibitem{Bieske} \text{T. Bieske}: On $\infty$-harmonic functions on the Heisenberg group, Communications in PDE $27$ (2002), no. 3, 727--761.

\bibitem{BiGaIshi}\text{I. Birindelli; G. Galise; I. Ishii}, A family of degenerate elliptic operators: maximum principle and its consequences, preprint (2016),  arXiv:1612.01700v1.

\bibitem{BLU} \text{ A. Bonfiglioli, E. Lanconelli, F. Uguzzoni}: Stratified Lie groups and Potential theory for their Sub-Laplacians,  Springer Monographs in Mathematics, 26. New York, NY: Springer-Verlag, 2007.



 \bibitem{CC} \text{L.A. Caffarelli; Luis A., X. Cabr\'e,} Fully nonlinear elliptic equations. American Mathematical Society Colloquium Publications, $43$. AMS, Providence, RI, 1995.
 
 \bibitem{CDT}\text{L. Capogna; D. Danielli; D., S. Pauls,} An introduction to the Heisenberg group and the sub-Riemannian Isoperimetric problem. Basel: Progress in Mathematics, (2007) Vol. $259$, Birkhauser-Verlag.
 
 \bibitem{Crandall} \text{M. Crandall,}
Viscosity solutions: a primer. 
Viscosity solutions and applications (Montecatini Terme, 1995), 1--43, 
Lecture Notes in Math., 1660, Fond. CIME/CIME Found. Subser., Springer, Berlin, 1997. 

\bibitem{userguide} \text{M.G. Crandall; H. Ishii; P.-L.Lions, Pierre-Louis,}
User's guide to viscosity solutions of second order partial differential equations.
Bull. Amer. Math. Soc. (N.S.) $27$ (1992), no. 1, 1--67. 

\bibitem{DoGaNi}\text{D. Danielli; N. Garofalo, D.-M. Nhieu,}
On the best possible character of the LQ norm in some a priori estimates for non-divergence form equations in Carnot groups. (English summary) 
Proc. Amer. Math. Soc. 131 (2003), no. 11, 3487--3498.

\bibitem{FeFa}\text{Some a priori estimates for a class of operators in the Heisenberg group}
Ann. Mat. Pura Appl. (4) 193 (2014), no. 4, 1019?1040.

 \bibitem{GT} \text{D. Gilbarg; N.S. Trudinger,} Elliptic partial differential equations of second order. Reprint of the 1998 edition. Classics in Mathematics. Springer-Verlag, Berlin, 2001
\bibitem{Hormander} \text{L. Hormander,} Hypoelliptic second order differential equations, Acta. Mathematica, $119$  (1968) 147--171.

 \bibitem{GuLa}\text{C.E. Gutierrez; E. Lanconelli}, Maximum principle, nonhomogeneous Harnack inequality, and Liouville theorems for $X$-elliptic operators. Comm. Partial Differ. Equ. 28, 1833--1862 (2003) 
\bibitem{GuMo}\text{C.E Gutierrez; A. Montanari}, Maximum and comparison principles for convex functions on the Heisenberg group. Comm. Partial Differ. Equ. 29(9?10), 1305--1334 (2004) 



\bibitem{Ishii1}  \text{H. Ishii},  On the equivalence of two notions of weak solutions, viscosity solutions and distribution solutions,  Funkcialaj Ekvacioj, $38$ (1995) 101--120.
\bibitem{Ishii_Lions} \text{H. Ishii, P.-L. Lions,} Viscosity solutions of fully nonlinear second-order elliptic partial differential equations,  J. Differential Equations 83 (1990), no. 1, 26--78.

\bibitem{JK}\text{E. Jakobsen; K. H. Karlsen,} Continuous dependence estimates for viscosity solutions of fully nonlinear degenerate elliptic equations. Electron. J. Differential Equations 2002, No. 39, 10 pp. 

\bibitem{LiuManfrediZhou}\text{Q. Liu, J. J. Manfredi \& X. Zhou,} Lipschitz and convexity  preserving for
solutions of semilinear equations in the Heisenberg group, preprint (2016) arXiv:1603.08986v2
\bibitem{Manfredi}\text{J. Manfredi}: Non-linear subelliptic equations on Carnot groups: Analysis and geometry
in metric spaces, Notes of a course given at the Third School on Analysis and Geometry in
Metric Spaces, Trento.  (2003).
\bibitem{MGM}\text{A. Montanari, C. Guidi, V. Martino,} Nonsmooth viscosity solutions of elementary symmetric functions of the complex Hessian. J. Differential Equations 260 (2016), no. 3, 2690--2703.
\bibitem{MM} \text{A. Montanari, V. Martino,} Nonsmooth solutions for a class of fully nonlinear PDE's on Lie groups. Nonlinear Anal. 126 (2015), 115--130.
\bibitem{LuiroParviainen}\text{H. Luiro; M. Parviainen}: Regularity for nonlinear stochastic games, preprint (2015) arXiv:1509.07263v2.
\bibitem{LWang} \text{L. Wang}
H\"older estimates for subelliptic operators. 
J. Funct. Anal. 199 (2003), no. 1, 228--242. 
\end{thebibliography}
\end{document}